\documentclass[12pt,reqno]{amsart}

\usepackage{float}
\usepackage{mathtools}
\usepackage{times}
\usepackage[T1]{fontenc}
\usepackage{mathrsfs}
\usepackage{latexsym}
\usepackage[titletoc, title]{appendix}
\usepackage{amsmath,amsfonts,amsthm,amssymb,amscd}
\usepackage[dvipsnames]{xcolor}
\usepackage{hyperref}
\usepackage{amsmath}
\usepackage[utf8]{inputenc}

\usepackage{color}
\usepackage{breakurl}

\usepackage{comment}
\newcommand{\bburl}[1]{\textcolor{blue}{\url{#1}}}
\newcommand{\seqnum}[1]{\href{https://oeis.org/#1}{\rm \underline{#1}}}

\usepackage{caption}
\newtheorem{thm}{Theorem}[section]

\newtheorem{cor}[thm]{Corollary}

\newtheorem{lem}[thm]{Lemma}
\newtheorem{prop}[thm]{Proposition}
\newtheorem{exa}[thm]{Example}

\newtheorem{defi}[thm]{Definition}
\newtheorem{rek}[thm]{Remark}

\usepackage[utf8]{inputenc}

\numberwithin{equation}{section}

\begin{document}

\title{Schreier Sets of Intervals, Super-Schreier Sets, and Catalan Numbers}

\author[H. V. Chu]{H\`ung Vi\d{\^e}t Chu}
\email{\textcolor{blue}{\href{mailto:hchu@wlu.edu}{hchu@wlu.edu}}}
\address{Department of Mathematics, Washington and Lee University, Lexington, VA 24450, USA}  

\author[M. Khaduri]{Mariam Khaduri}
\email{\textcolor{blue}{\href{mailto:mkhaduri@mail.wlu.edu}{mkhaduri@mail.wlu.edu}}}
\address{Department of Mathematics, Washington and Lee University, Lexington, VA 24450, USA}  

\author[M. M. Khokhar]{Moiz M. Khokhar}
\email{\textcolor{blue}{\href{mailto:mkhokhar@mail.wlu.edu}{mkhokhar@mail.wlu.edu}}}
\address{Department of Mathematics, Washington and Lee University, Lexington, VA 24450, USA}

\author[R. Zhou]{Ruoan Zhou}
\email{\textcolor{blue}{\href{mailto:rzhou@mail.wlu.edu}{rzhou@mail.wlu.edu}}}
\address{Department of Mathematics, Washington and Lee University, Lexington, VA 24450, USA}

\thanks{Mariam Khaduri, Moiz Khokhar, and Ruoan Zhou are undergraduate students at Washington and Lee University. The authors are thankful for the support of Washington and Lee University’s 2026 Summer Research Scholars program.}

\subjclass[2020]{11B37 (primary); 11B39, 11B50, 11B65 (secondary)}

\keywords{Schreier set; recurrence; Catalan number}

\maketitle
 
\begin{abstract}
A finite nonempty set $F\subset\mathbb{N}$ is Schreier if $\min F\ge |F|$.
First, we prove a linear recurrence relation and compute initial counts for Schreier sets consisting of intervals. Two intervals of integers are separated if their union is not an interval. If $\mathcal J_{k,n}$ is the collection of Schreier sets that are the union of exactly $k$ separated intervals, then the sequence $(|\mathcal{J}_{k,n}|)_{n=1}^\infty$ satisfies the characteristic polynomial  $p_k(x) = (x-1)^{2k+1}(x+1)^k$. Furthermore, we introduce the new concept of $k$-super-Schreier sets and let $\mathcal{S}_{k,n}$ denote the collection of $k$-super Schreier sets whose maximum is $n$. We show that the sequence $(|\mathcal{S}_{k,n}|)_{n=1}^\infty$ satisfies a Fibonacci-type recurrence with a remainder term expressible as a polynomial of $n$.

\end{abstract}

\tableofcontents


\section{Introduction}

A finite nonempty subset $F$ of the natural numbers is said to be \textit{Schreier} if $\min F \ge \left|F\right|,$ where $\left|F\right|$ is the cardinality of $F$. Besides being useful in Banach space theory, Schreier sets are fascinating combinatorial objects that are closely related to well-known sequences and recurrences. In 2012, Bird \cite{Bi} discovered that for all $n\in \mathbb{N}$,
\[
|\{F \subset \{1,2,\ldots,n\}\,:\, F \text{ is Schreier and } n \in F\}| \ =\ F_n,
\]
where $(F_n)_{n=1}^{\infty}$ is the Fibonacci sequence with $F_1 = F_2 = 1$ and $F_n = F_{n-1} + F_{n-2}$ for $n \geq 3$.
Inspired by this unexpected observation, subsequent works have established other recurrence relations from counting sets that satisfy a variant of the Schreier condition and possess other nice properties. For example, Beanland et al.\ \cite{BCF}  discovered inclusion-exclusion type linear recurrences of higher orders by generalizing the Schreier condition to $q \min E \ge p \left|E\right|$. More recently, Beanland et al.\ \cite{BGHH} counted unions of Schreier sets and proved linear recurrences described by recursively defined characteristic polynomials. Last but not least, the Schreier-type condition $q\min F \ge \left|F\right|$ was recenty used to answer an open question in approximation theory regarding the existence of a conditional $1$-suppression quasi-greedy basis \cite{BC}.

In this paper, we study Schreier sets consisting of intervals then introduce a new concept of \textit{super-Schreier} sets and uncover their linear recurrence.

An interval of integers is a set consisting of consecutive integers. Two intervals of integers $I_1$ and $I_2$ are \textit{separated} if either $\max I_1 + 1 < \min I_2$ or $\max I_2 + 1 < \min I_1$. In words, two intervals are separated if their union is not an interval. Let $\mathcal{J}_k$ be the collection of sets that are the union of exactly $k$ separated intervals. For each $n\in \mathbb{N}$, define
\[
\mathcal{J}_{k,n} \ :=\ \{F \subset \{1, 2, \dots, n\} : F \text{ is Schreier} \text{ and } F \in \mathcal{J}_k\}.
\]
Table \ref{Data_oneint} collects the initial terms of $(\left|\mathcal{J}_{k,n}\right|)_{n=1}^{\infty}$ for different $k$ values.

\begin{table}[H]
\centering
\begin{tabular}{ |c| c| c| c| c| c| c| c| c| c| c| c| c| c| c| c| c| c| c| c| c|c|}
\hline
$n$ &$1$& $2$ & $3$ & $4$ & $5$ & $6$ & $7$ & $8$ & $9$ & $10$ & $11$ & $12$ & $13$ & $14$ & $15$ & $16$\\
\hline
$|\mathcal{J}_{1,n}|$ & $1$ & $2$ & $4$ & $6$ & $9$ & $12$ & $16$ & $20$ & $25$ & $30$ & $36$ & $42$ & $49$ & $56$ & $64$ & $72$  \\
\hline
$|\mathcal{J}_{2,n}|$ & $0$ & $0$ & $0$ & $1$ & $3$ &$8$ & $16$ & $30$ & $50$ & $80$ & $120$ & $175$ & $245$ & $336$ & $448$ & $588$  \\
\hline
$|\mathcal{J}_{3,n}|$ & $0$ & $0$ & $0$ & $0$ & $0$ & $0$ & $1$ & $4$ & $13$ & $32$ & $71$ & $140$ & $259$ & $448$ & $742$ & $1176$ \\
\hline
$|\mathcal{J}_{4,n}|$ & $0$ & $0$ & $0$ & $0$ & $0$ & $0$ & $0$ & $0$ & $0$ & $1$ & $5$ & $19$ & $55$ & $140$ & $316$ & $660$ \\
\hline
$|\mathcal{J}_{5,n}|$ & $0$ & $0$ & $0$ & $0$ & $0$ & $0$ & $0$ & $0$ & $0$ & $0$ & $0$ & $0$ & $1$ & $6$ & $26$ & $86$ \\
\hline
\end{tabular}
\caption{The first $16$ values of $(|\mathcal{J}_{k, n}|)_{n=1}^\infty$ with $1\le k\le 5$. Note that $(|\mathcal{J}_{1,n}|)$ is \seqnum{A087811}; $(|\mathcal{J}_{2,n}|)$ is \seqnum{A002624}; $(|\mathcal{J}_{3,n}|)$ is \seqnum{A060099}; $(|\mathcal{J}_{4,n}|)$ is \seqnum{A060100}; $(|\mathcal{J}_{5,n}|)$ is \seqnum{A060101}.}
\label{Data_oneint}
\end{table}

\begin{defi}\normalfont Let $p(x) = c_rx^r + c_{r-1}x^{r-1} + \cdots + c_1x+c_0$ be a polynomial with real coefficients
$(c_i)_{i=0}^r$. A sequence $(a_n)_{n=1}^\infty$ is said to \textit{satisfy} $p(x)$ if
$$c_ra_n + c_{r-1}a_{n-1} + · · · + c_1a_{n-r+1} + c_0a_{n-r} = 0, \mbox{ for all }n \ge r + 1.$$
\end{defi}

\begin{thm}\label{m1} Let $k\in \mathbb{N}$. We have
$$|\mathcal{J}_{k,n}|\ =\ \begin{cases}0, &\mbox{ if }n\le 3k-3;\\ 1, &\mbox{ if }n = 3k-2;\\ k+1, &\mbox{ if }n = 3k-1; \\ 
\binom{k+1}{2}+2k+1, &\mbox{ if }n = 3k;\\
\binom{k+1}{3}+2\binom{k+1}{2} + (k+1)^2, &\mbox{ if }n = 3k+1.
\end{cases}$$
The sequence $(|\mathcal{J}_{k,n}|)$ satisfies the polynomial $$p_k(x) = (x-1)^{2k+1}(x+1)^k.$$
\end{thm}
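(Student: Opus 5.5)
Fix $k$ and count $\mathcal{J}_{k,n}$ by conditioning on the minimum $m=\min F$ and the size $s=|F|$. A union of exactly $k$ separated intervals inside $\{m,m+1,\dots,n\}$ with $m\in F$ and $|F|=s$ is a composition problem. The $k$ interval lengths $\ell_1,\dots,\ell_k\ge 1$ sum to $s$, which gives $\binom{s-1}{k-1}$ choices. The set $\{m,\dots,n\}$ has $n-m+1-s$ non-elements of $F$. Of these, the $k-1$ internal gaps take $g_1,\dots,g_{k-1}\ge 1$, and the final gap after $\max F$ takes $g_k\ge 0$; there is no initial gap because $m\in F$. Stars and bars gives $\binom{n-m-s+1}{k-1}$ choices. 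Hence
\[
|\mathcal{J}_{k,n}| \;=\; \sum_{m\ge 1}\ \sum_{s=k}^{m} \binom{s-1}{k-1}\binom{n-m-s+1}{k-1},
\]
where the Schreier condition is exactly $s\le m$.

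For the recurrence, I pass to generating functions, since it is cleaner than manipulating sums. Set $j=m-s\ge 0$ and $t=n-m-s+1$, so that $n=2s+j+t-1$. Then
\[
\sum_n |\mathcal{J}_{k,n}|x^n \;=\; x^{-1}\Big(\sum_{s\ge k}\tbinom{s-1}{k-1}x^{2s}\Big)\Big(\sum_{j\ge0}x^j\Big)\Big(\sum_{t}\tbinom{t}{k-1}x^t\Big).
\]
The first factor equals $x^{2k}/(1-x^2)^k$. The second is $1/(1-x)$. The third is $x^{k-1}/(1-x)^k$. The generating function is therefore
\[
\frac{x^{3k-2}}{(1-x)^{2k+1}(1+x)^k},
\]
a rational function whose denominator is the reciprocal polynomial of $p_k(x)=(x-1)^{2k+1}(x+1)^k$. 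The numerator has degree $3k-2$, which is less than $\deg p_k=3k+1$. Multiplying through by the denominator and comparing coefficients of $x^n$ for $n\ge 3k+2$ then gives the linear recurrence with characteristic polynomial $p_k$, valid for $n\ge r+1=3k+2$ as the Definition requires.

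The initial values come from expanding the generating function to order $x^{3k+1}$. The factor $x^{3k-2}$ shows immediately that $|\mathcal{J}_{k,n}|=0$ for $n\le 3k-3$ and $|\mathcal{J}_{k,3k-2}|=1$. For the next three coefficients, I take the coefficients of $x^1,x^2,x^3$ in $(1-x)^{-(2k+1)}(1+x)^{-k}$. Using $(1-x)^{-(2k+1)}=\sum\binom{2k+i}{i}x^i$ and $(1+x)^{-k}=\sum(-1)^i\binom{k+i-1}{i}x^i$, the coefficient of $x$ is $(2k+1)-k=k+1$, which matches. The $x^2$ and $x^3$ coefficients are convolutions that I will simplify and compare with $\binom{k+1}{2}+2k+1$ and $\binom{k+1}{3}+2\binom{k+1}{2}+(k+1)^2$. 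A cleaner route is to write $(1-x)^{-(2k+1)}(1+x)^{-k}=(1-x)^{-(k+1)}(1-x^2)^{-k}$. Then the coefficient of $x^2$ is $\binom{k+2}{2}+k$, and the coefficient of $x^3$ is $\binom{k+3}{3}+k(k+1)$.

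The main obstacle is only this bookkeeping: checking that these expressions equal the stated forms, and checking that the boundary conventions hold, in particular that $g_k\ge 0$ is handled correctly and that $\binom{t}{k-1}$ vanishes for $t<k-1$. As a sanity check, $k=1$ gives $x/((1-x)^3(1+x))$, which reproduces the row $1,2,4,6,9,\dots$ of Table~\ref{Data_oneint}.
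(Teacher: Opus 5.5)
Your proof is correct, and it takes a genuinely different route from the paper.

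\textbf{The paper's route.} It computes the four initial values by separate case analyses on $\min F$; the case $n=3k+1$ is relegated to Appendix \ref{tech_cal}. It then obtains the recurrence by induction on $k$. The bijective identity $|\mathcal{J}_{k,n}|+|\mathcal{I}_{k+1,n+1}|=|\mathcal{I}_{k+1,n+3}|$ of Lemma \ref{inductive_lem} yields the row relation \eqref{AI_relation}. That relation multiplies the characteristic polynomial by $(x-1)^2(x+1)$ at each step, starting from the explicit count $|\mathcal{I}_{1,n}|=\lceil n/2\rceil$.

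\textbf{Your route.} You count compositions of interval lengths and gaps directly and get the closed form $G_k(x)=x^{3k-2}/\bigl((1-x)^{2k+1}(1+x)^k\bigr)$ in one stroke. The vanishing range, every initial value, and the recurrence all follow from it, so both the appendix computation and the induction disappear.

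\textbf{What each approach buys.} Yours gives strictly more information. Since $x^{3k-2}$ and $(1-x)^{2k+1}(1+x)^k$ are coprime, every polynomial satisfied by the sequence is divisible by $p_k$, so $p_k$ is the minimal one; the paper does not show this. Also, $G_{k+1}(x)\,(1-x-x^2+x^3)=x^3G_k(x)$ is exactly \eqref{AI_relation}. The relation the paper found experimentally and then proved bijectively is therefore a one-line consequence of your formula. What the paper's route buys in return is a combinatorial explanation, via explicit bijections, of why consecutive rows are linked by the factor $(x-1)^2(x+1)$.

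\textbf{The bookkeeping you left open.} It is immediate from Pascal's rule. First, $\binom{k+2}{2}+k=\binom{k+1}{2}+2k+1$. Second, $\binom{k+3}{3}=\binom{k+1}{3}+2\binom{k+1}{2}+(k+1)$, so $\binom{k+3}{3}+k(k+1)$ equals the stated value at $n=3k+1$. The convention you need is $\binom{t}{k-1}=0$ for every integer $t<k-1$, negative $t$ included. This is exactly what the stars-and-bars count gives when $n-m+1-s<k-1$, and it makes your third factor a sum over $t\ge k-1$.
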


Next, we introduce the new concept of super-Schreier sets. 
Given a set $F$ of natural numbers and $k\in \mathbb{N}$, we use $\min_k F$ and $\max_k F$
to denote the $k$th smallest and the $k$th largest integer in $F$, respectively. Since Schreier sets have their size bounded by the minimum, we wish to give a stronger property of sets, whose size is bounded by $\min_r F$ for all $r$ up to a certain number $k$. One may naively require $\min_r F \ge |F| + (r-1)$ for all positive integers $r\le k$. Unfortunately, the requirement is equivalent to the classical requirement $\min F\ge |F|$ because
$$\min_r F \ \ge\ \min F + (r-1)\ \ge\ |F| + (r-1).$$
Hence, the collection of requirements
$$\min F\ \ge\ |F|, \min_2 F\ \ge\ |F| + 1, \ldots, \mbox{ and }\min_k F\ \ge\ |F|+(k-1)$$
does not strengthen the notion of Schreier sets. The reason is that each time we move to the next smallest integer, we increase the right side by only $1$. A natural modification is to increase the right side by $2$, which gives us the next definition.

\begin{defi}\normalfont
For $k\in\mathbb{N}$, a nonempty, finite set $F\subset\mathbb{N}$ is said to be \textit{$k$-super-Schreier} if $|F|\ge k$ and  
$$\min_r F \ \ge\ |F| + 2r-2, \mbox{ for all integers }r\in [1,k].$$
\end{defi}

For $k, n\in \mathbb{N}$, define
$$\mathcal{S}_{k, n}\ :=\ \{F\subset \{1, 2, \ldots, n\}\,:\, F\mbox{ is }k\mbox{-super-Schreier}\mbox{ and }n\in F\}.$$
A simple Python program by ChatGPT \cite{GPT} helps collect the data in Table \ref{skn}.

\begin{table}[H]
\centering
\begin{tabular}{ |c| c| c| c| c| c| c| c| c| c| c| c| c| c| c| c| c| c| c|}
\hline
$n$ &$1$& $2$ & $3$ & $4$ & $5$ & $6$ & $7$ & $8$ & $9$ & $10$ & $11$ & $12$ & $13$ & $14$ & $15$ & $16$ & $17$\\
\hline
$|\mathcal{S}_{1, n}|$  & $1$ & $1$ & $2$ & $3$ & $5$ & $8$ & $13$ & $21$ & $34$ & $55$ & $89$ & $144$ & $233$ & $377$ & $610$ & $987$ & $1597$  \\
\hline
$|\mathcal{S}_{2, n}|$ & $0$ & $0$ & $0$ & $2$ & $3$ & $6$ & $10$ & $17$ & $28$ & $46$ & $75$ & $122$ & $198$ & $321$ & $520$ & $842$ & $1363$  \\
\hline
$|\mathcal{S}_{3, n}|$ & $0$ & $0$ & $0$ & $0$ & $0$ & $0$ & $5$ & $9$ & $19$ & $34$ & $60$ & $102$ & $171$ & $283$ & $465$ & $760$ & $1238$  \\
\hline
$|\mathcal{S}_{4, n}|$ & $0$ & $0$ & $0$ & $0$ & $0$ & $0$ & $0$ & $0$ & $0$ & $14$ & $28$ & $62$ & $117$ & $214$ & $375$ & $643$ & $1083$  \\
\hline
$|\mathcal{S}_{5, n}|$ & $0$ & $0$ & $0$ & $0$ & $0$ & $0$ & $0$ & $0$ & $0$ &  $0$ &  $0$ &  $0$ & $42$ & $90$ & $207$ & $407$ & $768$\\
\hline
$|\mathcal{S}_{6, n}|$ & $0$ & $0$ & $0$ & $0$ & $0$ & $0$ & $0$ & $0$ & $0$ &  $0$ &  $0$ &  $0$ & $0$ & $0$ & $0$ & $132$ & $297$\\
\hline
\end{tabular}
\caption{The first $17$ values of $(|\mathcal{S}_{k, n}|)_{n=1}^\infty$ with integers $k \in [1,6]$.}
\label{skn}
\end{table}

\begin{thm}\label{m2}
    For each $k\in \mathbb{N}$, we have
    $$|\mathcal{S}_{k, n}|\ =\ \begin{cases}
        0,&\mbox{ if }1\le n\le 3k-3;\\
        \frac{1}{k+1}\binom{2k}{k},&\mbox{ if }n = 3k-2;\\
        \frac{3}{2k+1}\binom{2k+1}{k+2}, &\mbox{ if }n = 3k-1.\\
    \end{cases}$$
    For $n\ge 3k$, we have 
    \begin{equation}\label{e10}|\mathcal{S}_{k, n}|-|\mathcal{S}_{k, n-1}|-|\mathcal{S}_{k, n-2}|\ =\ \begin{cases}0, &\mbox{ if }k = 1;\\ 1, &\mbox{ if }k=2;\\  n-4,&\mbox{ if }k=3;\\ \frac{1}{(k-2)!}\left(\prod_{i=k}^{2k-4}(n-i)\right)(n-3k+5), &\mbox{ if }k\ge 4.\end{cases}\end{equation}
\end{thm}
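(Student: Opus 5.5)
The plan is to follow Bird's bijective proof of the Fibonacci count, which is the case $k=1$, and to track exactly where it breaks for larger $k$. Write $F=\{a_1<\cdots<a_m\}$ with $a_m=n$; the $k$-super-Schreier condition says $m\ge k$ and $a_r\ge m+2r-2$ for $1\le r\le k$.

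\emph{Initial values.} If $n=a_m\le 3k-3$, then $m\ge k$ gives $a_k\ge 3k-2$, which forces $n\ge 3k-2$, so the count is $0$. For $n=3k-2$ the inequality $a_k\ge m+2k-2$ forces $m=k$. Then $a_k=n=3k-2$ and $k+2r-2\le a_r$ with $a_r<a_{r+1}$. I will rewrite this via $b_r=a_r-(k+2r-2)\ge 0$. The condition becomes $b_k=0$ and $b_{r+1}\ge b_r-1$, a lattice path (ballot) condition, and the standard cycle lemma or reflection argument counts these as the Catalan number $\frac{1}{k+1}\binom{2k}{k}$. For $n=3k-1$ there are two cases: $m=k$ with $b_k=1$, and $m=k+1$ with the tight constraints. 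These give a second ballot-type count, which should sum to $\frac{3}{2k+1}\binom{2k+1}{k+2}$. I would verify this with the same path encoding, cross-checking against Table \ref{skn}.

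\emph{Recurrence.} For $F\in\mathcal S_{k,n}$, first consider the translate $F-1$. It lies in $\mathcal S_{k,n-1}$ exactly when no constraint is tight, i.e.\ $a_r\ge m+2r-1$ for all $r\le k$; conversely, $G\mapsto G+1$ is an injection $\mathcal S_{k,n-1}\to\mathcal S_{k,n}$. So $|\mathcal S_{k,n}|-|\mathcal S_{k,n-1}|$ counts the \emph{tight} sets, those with $a_r=m+2r-2$ for some $r\le k$. For a tight set, let $r$ be the smallest tight index, delete $a_r$, and shift the remaining elements down by $2$ (mimicking $F\mapsto (F\setminus\{m\})-2$ when $k=1$). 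I will check that the result satisfies the super-Schreier inequalities with size $m-1$ and maximum $n-2$, whenever it still has at least $k$ elements. I will also check that the inverse re-inserts $|G|+2r-1$ after shifting $G$ up by $2$.

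The map is not a bijection onto $\mathcal S_{k,n-2}$. The failures come from two sources: tight sets with $m=k$, whose image has fewer than $k$ elements, and members of $\mathcal S_{k,n-2}$ for which the reinsertion position $r$ is ambiguous or creates an earlier tight index. The remainder in \eqref{e10} is then (exceptional tight sets) minus (unreached elements of $\mathcal S_{k,n-2}$). I expect both families to consist of sets of size $k$ or $k+1$ whose first few elements are nearly forced. Their counts should be products of ballot numbers and binomial coefficients in $n$, giving a polynomial of degree $k-2$, namely $\frac{1}{(k-2)!}\prod_{i=k}^{2k-4}(n-i)\,(n-3k+5)$. The small cases $k=2,3$ (remainder $1$ and $n-4$) serve as checks.

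\emph{Main obstacle.} The hardest step is choosing the deletion index $r$ so that the map is injective and its image is cleanly described. Identifying and counting the exceptional sets is delicate, since the tight constraints at different $r$ interact. If the minimal-tight-index choice gives messy exceptions, I would try the maximal tight index instead, or delete $a_r$ and shift only the elements above it. Failing a clean bijection, I would fall back on directly counting tight sets through the $b_r$ path encoding and verifying the identity algebraically.
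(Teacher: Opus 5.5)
Your two reductions are correct, and in fact sharper than you expect. The gap is that the proposal stops exactly where the content of \eqref{e10} lies: the remainder is never computed, only predicted to ``be products of ballot numbers and binomial coefficients.'' So the recurrence is not proved, and your picture of the exceptional sets is also off. The translation $G\mapsto G+1$ does identify $|\mathcal{S}_{k,n}|-|\mathcal{S}_{k,n-1}|$ with the number of tight sets. Deleting the smallest tight element $a_r$ and shifting down by $2$ does send tight sets with $m\ge k+1$ into $\mathcal{S}_{k,n-2}$. But there are no ``unreached'' elements and no ambiguity. If $r>1$, minimality forces $a_{r-1}=m+2r-3$, so the image $G$ is tight at $r-1$ and non-tight at every index in $[r,k-1]$. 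Conversely, inserting $|G|+2r-1$ into $G+2$ gives a valid preimage exactly when $G$ is tight at $r-1$ (or $r=1$) and non-tight at $r,\dots,k-1$. So $r-1$ is forced to be the largest tight index of $G$ in $[1,k-1]$, or $0$ if there is none. Thus the map is a bijection onto all of $\mathcal{S}_{k,n-2}$, and the remainder equals the number of tight $k$-element sets in $\mathcal{S}_{k,n}$. These sets are not ``nearly forced'': there are of order $n^{k-2}$ of them, and counting them is the actual work, which you leave undone.

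That count is within reach of your own $b$-encoding, and supplying it is the missing step. A $k$-element set with maximum $n$ is a sequence $b_1,\dots,b_k\ge 0$ with $b_{r+1}\ge b_r-1$ and $b_k=n-3k+2$. It is tight if and only if some $b_r=0$, and subtracting $1$ everywhere matches the non-tight ones with the sequences ending at $n-3k+1$. Summing over $b_{k-1}\le b_k+1$, the difference is the number of such sequences of length $k-1$ ending at $n-3k+3$. This is a ballot number, which reflection evaluates as $\frac{n-3k+5}{n-2k+3}\binom{n-k}{k-2}$ for $k\ge 2$. It reproduces the cases $k=2$, $k=3$ and $k\ge 4$ of \eqref{e10}; for $k=1$ there are no tight singletons once $n\ge 2$.

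Likewise, for $n=3k-1$ your case $m=k+1$ is empty, since it would force $a_k\ge 3k-1=a_{k+1}$. Write $C_k=\frac{1}{k+1}\binom{2k}{k}$ for the number of sequences with $b_k=0$, as you showed. Appending a final $0$ identifies the sequences with $b_k=1$ with the length-$(k+1)$ sequences counted by $C_{k+1}$ whose penultimate entry is $1$ rather than $0$. This gives $C_{k+1}-C_k$, which equals the stated value.

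For comparison, the paper splits $\mathcal{S}_{k,n}$ by whether $\max_2 F=n-1$ rather than by tightness. Its remainder is therefore the $k$-element sets containing $n-1$ and $n$, which are equinumerous with your tight $k$-sets. The paper never counts these directly: it relates the remainders for $k$ and $k+1$ by two further bijections and then verifies the polynomial by induction on $k$ and $n$. Completed as above, your route would replace that induction by a single closed-form ballot count.
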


\begin{rek}\normalfont
Note that the number $\binom{2k}{k}/(k+1)$ in Theorem \ref{m2} is the familiar $k$th Catalan number.     
\end{rek}

\begin{exa}\normalfont
    When $k = 4$, Theorem \ref{m2} gives
    $$|\mathcal{S}_{4, n}| \ =\ 0,\mbox{ for all }1\le n\le 9, \quad |\mathcal{S}_{4, 10}| \ =\ \frac{1}{5}\binom{8}{4}\ =\ 14, \quad |\mathcal{S}_{4, 11}| \ =\ \frac{3}{9}\binom{9}{6}\ =\ 28,$$
    and for $n\ge 12$,
    $$|\mathcal{S}_{4, n}|-|\mathcal{S}_{4, n-1}|-|\mathcal{S}_{4, n-2}|\ =\ \frac{1}{2}\left(\prod_{i=4}^{4}(n-i)\right)(n-7)\ =\ \frac{1}{2}(n-4)(n-7).$$
\end{exa}

The paper is structured as follows. Section 2 proves Theorem \ref{m1}, establishing the recurrence for Schreier sets that consist of intervals. To keep our proof concise, we discuss a technical and repetitive proof in Appendix \ref{tech_cal}. Section \ref{k-super-sect} investigates $k$-super-Schreier sets and proves Theorem \ref{m2}.

\section{Recurrence for Schreier sets of intervals}
In this section, first we compute the initial terms of $(|\mathcal{J}_{k,n}|)_{n=1}^\infty$ and then prove a linear recurrence for the sequence. We prove the linear recurrence by induction, which requires us to relate two consecutive rows of Table \ref{Data_oneint}. Gemini \cite{Gemini} gives us the relation
\begin{equation}\label{AI_relation}|\mathcal{J}_{k,n}| \ =\ |\mathcal{J}_{k+1, n+3}| - |\mathcal{J}_{k+1, n+2}| - |\mathcal{J}_{k+1, n+1}| + |\mathcal{J}_{k+1, n}|, \mbox{ for all } k, n\in \mathbb{N}.\end{equation}
The authors then prove the relation themselves. To that end,  we introduce the collection $\mathcal{I}_{k,n}$, which is a collection of sets that have maximum $n$ and are the union of exactly $k$ separated intervals. Precisely,
$$
\mathcal{I}_{k,n} \ :=\ \{F \subset \{1, 2, \dots, n\}\,:\, F \text{ is Schreier}, n\in F, \text{ and } F \text{ is $k$-separated\}}.
$$
By definition, 
$$|\mathcal{I}_{k,n}| \ =\ |\mathcal{J}_{k,n}| - |\mathcal{J}_{k,n-1}|, \mbox{ for all }k, n\ge 1,$$
with the convention that $|\mathcal{J}_{k,0}| = 0$. 
For readers who are interested in seeing how one may discover \eqref{AI_relation}, we include an explanation in Appendix \ref{woAI_appen}.

\subsection{Initial counts}
\begin{lem}
    For $k, n\in\mathbb{N}$, we have
    $$|\mathcal{J}_{k,n}|\ =\ \begin{cases}
  0, &\mbox{ if } n\le 3k-3,\\
  1, &\mbox{ if } n = 3k-2.
\end{cases}$$
\end{lem}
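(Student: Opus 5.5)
We will prove both parts with one lower bound on $\max F$ for every $F\in\mathcal J_k$ that is Schreier, and then identify the unique set attaining it.

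Let $F\in\mathcal J_k$ be Schreier, and write it as $F=I_1\cup\cdots\cup I_k$ with the intervals listed left to right. Put $m=\min F$ and $s=|F|$, so $s\le m$. Since the intervals are separated, each of the $k-1$ gaps between consecutive intervals contains at least one integer not in $F$. All of these integers lie in $[m,\max F]$, and so does all of $F$. This gives
\[
\max F\ \ge\ m+(s+k-1)-1\ =\ m+s+k-2 .
\]
Each interval is nonempty, so $s\ge k$, and the Schreier condition gives $m\ge s\ge k$. Hence $\max F\ge 3k-2$. Because $F\subset\{1,\dots,n\}$, we have $n\ge \max F\ge 3k-2$. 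Therefore $\mathcal J_{k,n}=\emptyset$ whenever $n\le 3k-3$, which is the first case.

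For $n=3k-2$, every inequality above must be an equality. That is, $\max F=3k-2$, $m=s=k$, every interval is a singleton (since $s=k$), and every gap has length exactly one. This forces
\[
F=\{k,k+2,k+4,\dots,3k-2\},
\]
which consists of $k$ singletons, pairwise separated by single gaps. It is Schreier because $\min F=k=|F|$, and it lies in $\{1,\dots,3k-2\}$. So $|\mathcal J_{k,3k-2}|=1$.

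The only point needing care is the equality analysis. Equality in $\max F\ge m+s+k-2$ requires that no integer of $[m,\max F]$ other than the $k-1$ gap points is missing from $F$, and that each gap is exactly one integer. Combined with $s=k$, this pins $F$ down uniquely. Everything else is immediate.
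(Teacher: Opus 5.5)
Your proof is correct and takes the same approach as the paper, which simply asserts that $\max F$ is minimized exactly by $\{k,k+2,\ldots,3k-2\}$. Your counting bound $\max F\ge \min F+|F|+k-2\ge 3k-2$, together with the equality analysis, supplies the justification the paper leaves implicit.
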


\begin{proof}
For a set $F\in \mathcal{J}_{k,n}$, since $F$ is Schreier and consists of $k$ intervals, the maximum of $F$ is minimized when $F = F_k\ :=\ \{k, k+2, k+4, \ldots, 3k-4, 3k-2\}$. Therefore, $\mathcal{J}_{k,n}$ is empty if $n\le 3k-3$, while $\mathcal{J}_{k, 3k-2}$ contains only $F_k$.
\end{proof}

Given $a,b\in \mathbb{N}$, let
$$[a,b]_2\ :=\ \{a\le n\le b\,:\, n-a\mbox{ is even}\}.$$

\begin{lem}
    For $k\in \mathbb{N}$, we have $|\mathcal{J}_{k, 3k-1}| = k+1$. 
\end{lem}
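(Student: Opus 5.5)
We will count Schreier sets $F\subset\{1,\dots,3k-1\}$ that are unions of exactly $k$ separated intervals. Two quantities control the count: the number of \emph{slack units} relative to the extremal set $F_k=[k,3k-2]_2$ from the previous lemma, and the minimum.

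Write $F=I_1\cup\dots\cup I_k$ with the intervals in increasing order. Let $m=\min F$, and let $\ell_j=|I_j|\ge 1$. Let $g_j\ge 2$ be the difference between $\min I_{j+1}$ and $\max I_j$. Then
$$\max F \ =\ m+\sum_{j=1}^k(\ell_j-1)+\sum_{j=1}^{k-1}g_j,$$
and the Schreier condition reads $m\ge |F|=\sum_j \ell_j$. Set $a_j=\ell_j-1\ge 0$ and $b_j=g_j-2\ge 0$. Then
$$\max F\ =\ m+2k-2+A+B,\qquad A=\sum a_j,\quad B=\sum b_j,$$
with the constraint $m\ge k+A$. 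Writing $m=k+A+c$ with $c\ge 0$, we get
$$\max F \ =\ 3k-2+2A+B+c.$$

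Requiring $\max F\le 3k-1$ forces $2A+B+c\le 1$, so $A=0$ and $B+c\le 1$. There are three disjoint possibilities:
\begin{itemize}
\item $B=c=0$, which gives $F_k$ itself (one set);
\item $c=1$ and $B=0$, which gives the shifted set $F_k+1=[k+1,3k-1]_2$ (one set);
\item $c=0$ and $B=1$, meaning exactly one of the $k-1$ gaps is enlarged by one ($k-1$ sets).
\end{itemize}
This totals $1+1+(k-1)=k+1$. Equivalently, in the paper's notation, these are $F_k$, $[k+1,3k-1]_2$, and the $k-1$ sets $[k,k+2i-2]_2\cup[k+2i+1,3k-1]_2$ for $1\le i\le k-1$.

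The parametrization is a bijection: $(m,\ell_j,g_j)$ determines $F$ uniquely and conversely, and any such data with $\ell_j\ge1$, $g_j\ge2$ yields a union of exactly $k$ separated intervals. So there is no double counting.

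There is no real obstacle here. The only care needed is confirming that the case $k=1$ is consistent: there are no gaps, and the count is $2$, namely $\{1\},\{2\}$. The count cannot be $\{1\},\{2\},\{1,2\}$, because $\{1,2\}$ is not Schreier. This matches $k+1=2$.
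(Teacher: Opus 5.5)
Your proof is correct and follows essentially the same route as the paper. The paper splits on $\min F$: $\min F\ge k+1$ leaves only $[k+1,3k-1]_2$, while $\min F=k$ gives $F_k$ together with the $k-1$ sets obtained by widening one gap, matching your cases $c=1$, $B=c=0$, and $B=1$ respectively. Your identity $\max F=3k-2+2A+B+c$ makes rigorous the exhaustiveness that the paper asserts informally (that the intervals must be singletons and at most one gap can widen), and the same bookkeeping with $2A+B+c\le 2$ or $\le 3$ also recovers the paper's counts for $n=3k$ and $n=3k+1$.
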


\begin{proof}
Let $F\in \mathcal{J}_{k, 3k-1}$ with $\min F = j$.
For $j\ge k$, the maximum of a set $F$ in $\mathcal{J}_{k,3k-1}$ with $\min F = j$ is minimized when $F = F_j := [j, j+2k-2]_2$. 

If $j\ge k+1$, then $\max F\ge \max F_j = 3k-1$.
Since sets in $\mathcal{J}_{k,3k-1}$ can only contain integers up to $3k-1$, $F_{k+1}$ is the only set in $\mathcal{J}_{k, 3k-1}$ with minimum at least $k+1$.

If $j=k$, then $F_k = [k, 3k-2]_2$.
Since sets in $\mathcal{J}_{k, 3k-1}$ can contain integers up to $3k-1$, we can shift a larger part of $F_k$ to the right by $1$ to obtain $(k-1)$ more sets:
$$[k, k+2i]_2 \cup [k+2i+3, 3k-1]_2, \quad 0\le i\le k-2.$$

Therefore, we have
a total of $(k+1)$ sets, i.e., $|\mathcal{J}_{k, 3k-1}| = k+1$. 
\end{proof}

We use the star-and-bar lemma to compute $|\mathcal{J}_{k,3k}|$ and $|\mathcal{J}_{k,3k+1}|$. For its proof, see \cite[Lemma 2.1]{KKMW}.

\begin{lem}\label{star}
    The number of solutions to $x_1 + x_2 + \cdots + x_p = n$ with $x_i \ge c_i$ (for
some nonnegative number $c_i$) is $\binom{n-\sum_{i=1}^p c_i+p-1}{p-1}$.
\end{lem}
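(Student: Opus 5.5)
We prove Lemma \ref{star} by the standard change of variables followed by a stars-and-bars bijection. Put $y_i := x_i - c_i$ for $1\le i\le p$. The map $(x_1,\ldots,x_p)\mapsto (y_1,\ldots,y_p)$ is a bijection between
\begin{itemize}
\item integer solutions of $x_1+\cdots+x_p = n$ with $x_i\ge c_i$, and
\item nonnegative integer solutions of $y_1+\cdots+y_p = m$, where $m := n-\sum_{i=1}^p c_i$.
\end{itemize}
Its inverse is simply $x_i = y_i + c_i$. (The $c_i$ are integers in all our applications.) So it suffices to show that the number of nonnegative solutions of $y_1+\cdots+y_p=m$ is $\binom{m+p-1}{p-1}$.

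For this, encode a solution $(y_1,\ldots,y_p)$ as a word of length $m+p-1$ over the alphabet $\{\star, \mid\}$: write $y_1$ stars, then a bar, then $y_2$ stars, then a bar, and so on, ending with $y_p$ stars. Every such word has exactly $p-1$ bars and $m$ stars. Conversely, any word with $m$ stars and $p-1$ bars decodes uniquely to a solution, by reading off the lengths of the $p$ (possibly empty) maximal runs of stars separated by the bars. These two maps are mutually inverse. A word is determined by choosing the positions of the $p-1$ bars among the $m+p-1$ slots, which gives $\binom{m+p-1}{p-1}$ solutions. Substituting $m = n-\sum_i c_i$ yields the claimed formula.

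The only point needing care is the degenerate range. If $m<0$ there are no solutions, and we read the binomial coefficient as $0$ under the usual convention $\binom{a}{b}=0$ for $a<b$ (valid when $0\le m+p-1<p-1$). For $p=1$ the formula gives $\binom{m}{0}=1$ when $m\ge 0$, which is correct. We state this convention explicitly, since the later counts of $|\mathcal{J}_{k,3k}|$ and $|\mathcal{J}_{k,3k+1}|$ will apply the lemma with small slack. Otherwise the argument is routine, and one may also simply cite \cite[Lemma 2.1]{KKMW}.
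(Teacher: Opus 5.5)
Your argument is correct and is the standard one: the paper gives no proof of Lemma \ref{star} itself and just cites \cite[Lemma 2.1]{KKMW}, whose usual proof is exactly your shift $y_i=x_i-c_i$ followed by the stars-and-bars bijection. One loose end remains in your degenerate-case remark. When $m\le -p$ the top entry $m+p-1$ is negative, and the falling-factorial definition then gives, e.g., $\binom{-2}{1}=-2\neq 0$. So you need either the convention $\binom{a}{b}=0$ for $a<0$ or the hypothesis $n\ge\sum_i c_i$, which holds in every application in the paper.
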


\begin{lem}
    For $k\in \mathbb{N}$, we have
    $$|\mathcal{J}_{k,3k}|\ =\ \binom{k+1}{2} +2k+1.$$
\end{lem}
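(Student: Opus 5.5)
We will count $\mathcal{J}_{k,3k}$ by splitting according to $j=\min F$, exactly as in the proof of $|\mathcal{J}_{k,3k-1}|=k+1$. Any $F\in\mathcal{J}_k$ with $\min F=j$ has $\max F\ge \max [j,j+2k-2]_2 = j+2k-2$. This tight packing must fit inside $\{1,\dots,3k\}$, so $j\le k+2$; the Schreier condition gives $j\ge k$ when $|F|=k$. However, the sets need not have exactly $k$ elements, since an interval may have length $2$ or more. So the parametrisation must track both the interval lengths and the gaps, and the Schreier condition becomes $j\ge |F|$.

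Write $F$ as intervals of lengths $\ell_1,\dots,\ell_k\ge 1$ with gaps $g_1,\dots,g_{k-1}\ge 1$ (numbers of missing integers between consecutive intervals). Then
\[
\max F = j+\sum_i \ell_i+\sum_i g_i-1 \le 3k, \qquad |F|=\sum_i \ell_i\le j.
\]
Set $\ell_i=1+a_i$, $g_i=1+b_i$ and $s=\sum a_i$, $t=\sum b_i$. The conditions become $k+s\le j$ and $j+2k-2+s+t\le 3k$, i.e.\ $j+s+t\le k+2$. Combined with $j\ge k+s$, this gives $2s+t\le 2$ with $j$ ranging over $k+s\le j\le k+2-s-t$.

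Now enumerate the cases, using Lemma \ref{star} to count the distributions of the $a_i$ among $k$ slots and the $b_i$ among $k-1$ slots:
\begin{itemize}
\item $s=0,t=0$: three choices of $j$ ($k,k+1,k+2$), one configuration each, giving $3$.
\item $s=0,t=1$: $k-1$ gap choices and $j\in\{k,k+1\}$, giving $2(k-1)$.
\item $s=0,t=2$: $\binom{k}{2}$ distributions of two units among $k-1$ gaps and $j=k$, giving $\binom{k}{2}$.
\item $s=1,t=0$: $k$ choices of which interval is lengthened and $j=k+1$, giving $k$.
\end{itemize}
Summing,
\[
3+2k-2+\binom{k}{2}+k=\binom{k}{2}+3k+1=\binom{k+1}{2}+2k+1,
\]
using $\binom{k+1}{2}=\binom{k}{2}+k$.

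The main obstacle is making sure the parametrisation is a bijection and that the Schreier inequality is correctly $j\ge \sum\ell_i$, not $j\ge k$. As a sanity check, for $k=1$ the formula gives $1+3=4=|\mathcal{J}_{1,3}|$, and for $k=2$ it gives $3+5=8=|\mathcal{J}_{2,6}|$, both matching Table \ref{Data_oneint}. The same bookkeeping should also extend to $n=3k+1$.
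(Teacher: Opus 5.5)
Your proof is correct and is essentially the paper's argument. The paper uses the same interval-length/gap variables and stars-and-bars, but groups the cells by $j=\min F$: one set for $j=k+2$, $2k$ sets for $j=k+1$, and $\binom{k+1}{2}$ sets for $j=k$. You group the same cells by the excesses $(s,t)$, and your reduction to $k+s\le j\le k+2-s-t$, hence $2s+t\le 2$, makes the case list visibly exhaustive and handles $k=1$ without the degenerate binomials such as $\binom{k-2}{k-2}$ that appear in the paper's count.
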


\begin{proof}
Let $F\in \mathcal{J}_{k,3k}$ with $\min F = j$.
For $j\ge k$, the maximum of a set in $\mathcal{J}_{k, 3k}$ with $\min F = j$ is minimized 
when $F = F_j := [j, j+2k-2]_2$. 
Let $(x_i)_{1\le i\le k}$ be the length of the $k$ intervals in $F$, and let
$(y_j)_{1\le j\le k-1}$ be the $(k-1)$ gaps between consecutive intervals. 
We proceed by case analysis.

Case 1: $j\ge k+2$. Then $\max F\ge\max F_j \ge 3k$. Since sets in $\mathcal{J}_{k, 3k}$ can only contain
integers up to $3k$, the only set $F\in \mathcal{J}_{k, 3k}$ with $\min F\ge k+2$ is $F_{k+2}$.

Case 2: $j = k+1$. Then $F_{k+1} = [k+1, 3k-1]_2$. Since $\min F_{k+1} > |F_{k+1}|$, we may obtain other sets in two ways: first, we allow exactly one gap of $2$ between two integers in $F$, and second, we allow $F$ to contain two consecutive integers. For the former, we have
$$\sum_{i=1}^k x_i + \sum_{j=1}^{k-1} y_j \ =\ |\{k+1, k+2, \ldots, 3k\}|\ =\ 2k$$
with $x_1 = \cdots = x_k = 1$ and $y_j\ge 1$. Hence,
$\sum_{j=1}^{k-1} y_j = k$  with $y_j\ge 1$.
By Lemma \ref{star}, the number of sets is $\binom{k-(k-1)+(k-2)}{k-2}  = k-1$.
For the latter, we have
$$\sum_{i=1}^{k} x_i + \sum_{j=1}^{k-1} y_j  \ =\ |\{k+1, k+2, \ldots, 3k\}|\ =\ 2k$$
with $y_1 = \cdots = y_{k-1} = 1$ and $x_i\ge 1$. Hence,
$\sum_{i=1}^k x_i = k+1$ with $x_i\ge 1$.
By Lemma \ref{star}, the number of sets is $\binom{(k+1)-k+(k-1)}{k-1} = k$.
In this case, we have a total of $1+(k-1) + k = 2k$ sets. 

Case 3: $j = k$. Then $F_k = [k, 3k-2]_2$. Due to $\min F = k$, each interval in $F$ must have exactly $1$ element, but the gaps between consecutive elements in $F$ can reach $3$. Let $\ell$ be the sum of all the gaps. Then $k-1\le \ell\le k+1$.
We have
$\sum_{j=1}^{k-1} y_j = \ell$
with $y_j\ge 1$. 
By Lemma \ref{star}, the number of sets is
\begin{align*}
\sum_{\ell=k-1}^{k+1}\binom{\ell - (k-1) + k-2}{k-2} &\ =\ \sum_{\ell= k-1}^{k+1}\binom{\ell-1}{k-2}\\
&\ =\ \binom{k-2}{k-2} + \binom{k-1}{k-2} + \binom{k}{k-2}\\
&\ =\ k + \binom{k}{2}\ =\ \binom{k+1}{2}.
\end{align*}

Therefore, $|\mathcal{J}_{k, 3k}| = \binom{k+1}{2}+2k+1$,
as claimed. 
\end{proof}

The calculation of $|\mathcal{J}_{k, 3k+1}|$ is similar but a bit more technical. We thus move it to Appendix \ref{tech_cal}.

\subsection{Linear recurrence}
Before proving the linear recurrence in Theorem \ref{m1}, we verify that the recurrence holds when $k = 1$ (Lemma \ref{inductive_lem}) then establish a connection between two consecutive rows of Table \ref{Data_oneint} (Corollary \ref{inductive}), a result to be used in the inductive step of Theorem \ref{m1}.
\begin{lem}\label{formula_basecase}
For every $n \in \mathbb{N}$, we have $|\mathcal{I}_{1,n}|=\left\lceil n/2 \right\rceil$.
\end{lem}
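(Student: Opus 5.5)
We count $\mathcal{I}_{1,n}$ directly. A set in $\mathcal{I}_{1,n}$ is a single interval $F=[a,n]$ with maximum $n$, so it is determined by its minimum $a$, where $1\le a\le n$. Its cardinality is $|F| = n-a+1$. The Schreier condition $\min F\ge |F|$ therefore reads
$$a\ \ge\ n-a+1, \quad\text{equivalently}\quad a\ \ge\ \frac{n+1}{2}.$$
The separation requirement is vacuous for $k=1$, since any nonempty interval is the union of exactly one interval. Hence $|\mathcal{I}_{1,n}|$ equals the number of integers $a$ with $\lceil (n+1)/2\rceil\le a\le n$, namely
$$n-\left\lceil \frac{n+1}{2}\right\rceil+1.$$

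It remains to simplify this by parity.
\begin{itemize}
\item If $n=2m$, then $\lceil (2m+1)/2\rceil = m+1$. The count is $2m-(m+1)+1 = m = \lceil n/2\rceil$.
\item If $n=2m-1$, then $\lceil 2m/2\rceil = m$. The count is $2m-1-m+1 = m = \lceil n/2\rceil$.
\end{itemize}
In both cases $|\mathcal{I}_{1,n}|=\lceil n/2\rceil$, as claimed.

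As a sanity check, one can compare with the identity $|\mathcal{I}_{1,n}|=|\mathcal{J}_{1,n}|-|\mathcal{J}_{1,n-1}|$ and Table \ref{Data_oneint}. The successive differences $1,1,2,2,3,3,\ldots$ of $1,2,4,6,9,12,\ldots$ agree with $\lceil n/2\rceil$.

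There is no real obstacle here. The only point requiring care is handling the ceiling correctly in the two parity cases.
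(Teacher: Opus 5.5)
Your proof is correct and follows the paper's argument: each set in $\mathcal{I}_{1,n}$ is $[a,n]$, the Schreier condition forces $\lceil (n+1)/2\rceil \le a \le n$, and a parity check turns $n-\lceil (n+1)/2\rceil+1$ into $\lceil n/2\rceil$. Your parametrization $n=2m-1$ of the odd case also covers $n=1$ cleanly, which the paper's $n=2j+1$ with $j\in\mathbb{N}$ technically leaves out.
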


\begin{proof}
Each set $F=\{k,\ldots,n-1,n\}\in \mathcal{I}_{1,n}$ is uniquely determined by its minimum $k$.
Since $F$ is Schreier, we have $k\ge n-k+1$, so 
$$\left\lceil\frac{n+1}{2}\right\rceil\ \le\ k\ \le\ n.$$
Hence, there are $n-\left\lceil(n+1)/2\right\rceil+1$
possible choices for $k$, meaning that 
$$|\mathcal{I}_{1,n}| \ =\ n-\left\lceil\frac{n+1}{2}\right\rceil+1.$$

It remains to verify that $$n-\left\lceil\frac{n+1}{2}\right\rceil+1\ =\ \left\lceil\frac{n}{2}\right\rceil.$$
If $n=2j,$ for some $j\in\mathbb{N}$, then $$n-\left\lceil\frac{n+1}{2}\right\rceil+1\ =\ 2j-(j+1)+1\ =\ j\ =\ \left\lceil\frac{n}{2}\right\rceil.$$
If $n=2j+1,$ for some $j\in\mathbb{N}$, then $$n-\left\lceil\frac{n+1}{2}\right\rceil+1\ =\ (2j+1)-(j+1)+1\ =\ j+1\ =\ \left\lceil\frac{n}{2}\right\rceil.$$
Hence, we have $|\mathcal{I}_{1,n}|=\left\lceil n/2\right\rceil$, as claimed. 
\end{proof}

\begin{cor}\label{basecase}
    For $n\ge 5$, we have
    $$|\mathcal{J}_{1,n}| \ =\ 2|\mathcal{J}_{1,n-1}| - 2|\mathcal{J}_{1,n-3}| + |\mathcal{J}_{1, n-4}|.$$
\end{cor}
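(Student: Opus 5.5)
We derive the recurrence from Lemma \ref{formula_basecase} by passing between $\mathcal{J}_{1,\cdot}$ and $\mathcal{I}_{1,\cdot}$. Since $|\mathcal{I}_{1,n}| = |\mathcal{J}_{1,n}| - |\mathcal{J}_{1,n-1}|$ for all $n\ge 1$, we write $a_n := |\mathcal{J}_{1,n}|$ and $b_n := |\mathcal{I}_{1,n}| = \lceil n/2\rceil$. The target identity
$$a_n - 2a_{n-1} + 2a_{n-3} - a_{n-4}\ =\ 0$$
is the statement that $(a_n)$ satisfies $(x-1)^3(x+1) = x^4 - 2x^3 + 2x - 1$. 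Factoring out one copy of $(x-1)$, it suffices to show that $(b_n)$ satisfies $(x-1)^2(x+1) = x^3 - x^2 - x + 1$. Concretely, we rewrite the left side as
$$(a_n - a_{n-1}) - (a_{n-1}-a_{n-2}) - (a_{n-2}-a_{n-3}) + (a_{n-3}-a_{n-4}),$$
which telescopes back to $a_n - 2a_{n-1} + 2a_{n-3} - a_{n-4}$. This equals $b_n - b_{n-1} - b_{n-2} + b_{n-3}$, valid for $n\ge 5$ since then all indices $n-4\ge 1$ are in range.

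It then remains to check $\lceil n/2\rceil - \lceil (n-1)/2\rceil - \lceil (n-2)/2\rceil + \lceil (n-3)/2\rceil = 0$ for $n\ge 4$. We group the terms as $(\lceil n/2\rceil - \lceil (n-2)/2\rceil) - (\lceil (n-1)/2\rceil - \lceil (n-3)/2\rceil)$. Each bracket equals $1$, since $\lceil (m+2)/2\rceil = \lceil m/2\rceil + 1$, so the difference is $0$. Alternatively, one can split into the cases $n$ even and $n$ odd, as in the proof of Lemma \ref{formula_basecase}.

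There is no real obstacle. The only point requiring care is the index range: the convention $|\mathcal{J}_{1,0}|=0$ and $n\ge 5$ must ensure that every $b_m$ used has $m\ge 1$, so that Lemma \ref{formula_basecase} applies. The smallest index used is $b_{n-3}$ with $n-3\ge 2$, so this is fine.
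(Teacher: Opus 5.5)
Your proof is correct and follows essentially the same route as the paper. Both reduce the recurrence to $|\mathcal{I}_{1,n}| - |\mathcal{I}_{1,n-1}| - |\mathcal{I}_{1,n-2}| + |\mathcal{I}_{1,n-3}| = 0$ via Lemma \ref{formula_basecase} and then verify the ceiling identity. You do this with $\lceil (m+2)/2\rceil = \lceil m/2\rceil + 1$; the paper uses the equivalent identity $\lceil n/2\rceil + \lceil (n-3)/2\rceil = \lceil (n-2)/2\rceil + \lceil (n-1)/2\rceil$.
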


\begin{proof} Let $n\ge 5$. We have
$$\left\lceil \frac{n}{2}\right\rceil + \left\lceil \frac{n-3}{2}\right\rceil \ =\ \left\lceil \frac{n}{2}\right\rceil  + \left\lceil \frac{n-1}{2}\right\rceil - 1\ =\ \left\lceil \frac{n-2}{2}\right\rceil  + \left\lceil \frac{n-1}{2}\right\rceil.$$
Hence, it follows from Lemma \ref{formula_basecase} that
$$|\mathcal{I}_{1,n}| + |\mathcal{I}_{1,n-3}|\ =\ |\mathcal{I}_{1,n-1}| + |\mathcal{I}_{1,n-2}|,$$
which can be rewritten as
$$(|\mathcal{J}_{1,n}|-|\mathcal{J}_{1, n-1}|) + (|\mathcal{J}_{1,n-3}| - |\mathcal{J}_{1, n-4}|)\ =\ (|\mathcal{J}_{1,n-1}|-|\mathcal{J}_{1, n-2}|) + (|\mathcal{J}_{1,n-2}|-|\mathcal{J}_{1, n-3}|).$$
Therefore,
$$|\mathcal{J}_{1,n}|\ =\ 2|\mathcal{J}_{1,n-1}|-2|\mathcal{J}_{1, n-3}| + |\mathcal{J}_{1, n-4}|.$$
\end{proof}

\begin{lem}\label{inductive_lem}
    For $k, n\in \mathbb{N}$, we have
    $$|\mathcal{J}_{k,n}| + |\mathcal{I}_{k+1,n+1}| \ =\ |\mathcal{I}_{k+1, n+3}|.$$
\end{lem}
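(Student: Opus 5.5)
We prove the identity bijectively. We show that $\mathcal{I}_{k+1,n+3}$ is the disjoint union of a subfamily in bijection with $\mathcal{I}_{k+1,n+1}$ and a subfamily in bijection with $\mathcal{J}_{k,n}$. Take $F\in\mathcal{I}_{k+1,n+3}$ and let $I=[a,n+3]$ be its last interval. Split according to whether $|I|\ge 2$ (so $n+2\in F$) or $|I|=1$ (so $I=\{n+3\}$ and $n+2\notin F$).

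\emph{Case $|I|=1$.} Then $F=G\cup\{n+3\}$ with $G\subset\{1,\dots,n+1\}$, $\max G\le n+1$, and $G$ a union of exactly $k$ separated intervals. The Schreier condition reads $\min F\ge |F|=|G|+1$. To reach $\mathcal{J}_{k,n}$ we need a bijection that lowers the maximum by one and the required minimum by one. The natural choice is to shift $G$ down by one: $G\mapsto G-1$. This gives $\min(G-1)=\min G-1\ge |G|=|G-1|$, and $G-1\subset\{0,\dots,n\}$. Since $\min G\ge |G|+1\ge 2$, we get $\min(G-1)\ge 1$, so $G-1\subset\{1,\dots,n\}$. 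The shift preserves the number of separated intervals, so $G-1\in\mathcal{J}_{k,n}$. Conversely, for $H\in\mathcal{J}_{k,n}$ the set $(H+1)\cup\{n+3\}$ has $k+1$ separated intervals, because $\max(H+1)\le n+1<n+2$. It is also Schreier, since $\min=\min H+1\ge |H|+1$. So this case is in bijection with $\mathcal{J}_{k,n}$. (If $k$ exceeds what $\{1,\dots,n\}$ allows, both sides are empty, which is consistent.)

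\emph{Case $|I|\ge 2$.} Here we want a bijection onto $\mathcal{I}_{k+1,n+1}$. The candidate is to delete the two top elements $n+2,n+3$ when $|I|\ge 3$. That map lowers $|F|$ by $2$ and the maximum by $2$, which does not match the Schreier shift, so we must be careful. The better candidate is to shorten the last interval by one element and shift everything down by one. That is, send $F=G\cup[a,n+3]$ to $(G-1)\cup[a-1,n+1]$, removing the element at the top so that the new last interval is $[a-1,n+1]$, of length $|I|-1\ge 1$. The size drops by one and the minimum drops by one, so the Schreier condition is preserved exactly in both directions. The interval count stays $k+1$, and the inverse map (shift up by one and extend the last interval by one) is clearly valid. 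We must check $\min G-1\ge1$, which follows from $\min F\ge |F|\ge k+2\ge 2$.

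We expect the main obstacle to be the bookkeeping in this second case. We must confirm that the inverse map always lands in $\{1,\dots,n+3\}$ with maximum $n+3$, and that separation between $G$ and the last interval is unaffected: both shift together, so the gap is preserved. We must also handle small $n$, where the families may be empty, using the convention $|\mathcal{J}_{k,0}|=0$. Once both bijections are verified, adding the two cases gives $|\mathcal{I}_{k+1,n+3}|=|\mathcal{J}_{k,n}|+|\mathcal{I}_{k+1,n+1}|$.
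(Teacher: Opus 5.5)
Your proof is correct and matches the paper's argument: the paper also splits $\mathcal{I}_{k+1,n+3}$ according to whether $n+2\in F$ (equivalently, whether the last interval is $\{n+3\}$). In each case it uses the map $F\mapsto (F+1)\cup\{n+3\}$, from $\mathcal{J}_{k,n}$ and from $\mathcal{I}_{k+1,n+1}$ respectively, and these are exactly the inverses of your two maps; your second map $G\cup[a,n+3]\mapsto (G-1)\cup[a-1,n+1]$ is just $F\mapsto (F\setminus\{n+3\})-1$, so the exploratory remark about deleting the top two elements can be dropped.
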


\begin{proof}
We split $\mathcal{I}_{k+1,n+3}$ into 
$\mathcal{I}_{k+1,n+3}^{(1)} := \{F \in \mathcal{I}_{k+1,n+3} \,:\, n+2 \notin F\}$ and   $\mathcal{I}_{k+1,n+3}^{(2)} := \{F \in \mathcal{I}_{k+1,n+3} \,:\, n+2 \in F\}$.

First, define the map 
$$f: \mathcal{J}_{k,n} \rightarrow \mathcal{I}_{k+1,n+3}^{(1)}, \quad F \mapsto (F+1) \cup \{n+3\}.$$
The function $f$ is well-defined because for every $F\in \mathcal{J}_{k,n}$, 
$$\min f(F) \ =\ \min F + 1\ \ge\ |F| + 1 \ =\ |f(F)|, \max f(F) = n+3, \mbox{ and } \ n+2 \notin f(F).$$
Furthermore, appending $n+3$ to $F+1$ increases the number of intervals from $k$ to $k+1$ because $(\max F+1)+1\le n+2 < n+3$. Hence, the set $f(F)$ is in $\mathcal{I}_{k+1}$, and thus, $f$ is well-defined.

It is apparent from the definition that the function $f$ is injective. We show that $f$ is surjective. Pick a set $E \in \mathcal{I}_{k+1,n+3}^{(1)}$ and consider $F := (E \backslash \{n+3\}) -1 $. Since $n+2\notin E$, we have
$$
\max F\ =\ \max (E \ \backslash\ \{n+3\})-1\  \le\ n,$$
and
$$
\min F\ =\ \min E - 1\ \ge\ |E|-1\  =\ |F|.
$$
Moreover, as $\{n+3\}$ is an interval in $E$, the set $F$ consists of $k$ intervals. 
Hence, $F \in \mathcal{J}_{k,n}$. Since $f(F) = E$, the function $f$ is surjective. 

Next, define the map
$$g: \mathcal{I}_{k+1,n+1} \rightarrow \mathcal{I}_{k+1,n+3}^{(2)}, \quad F \mapsto (F+1) \cup \{n+3\}.$$
The function $g$ is well-defined because
$$\min g(F) \ =\ \min F + 1\ \ge\ |F| + 1\ =\ |g(F)|, \max g(F) \ =\ n+3, \mbox{ and }n+2 \in g(F).$$
Furthermore, since $n+3$ is appended to the last interval of $F+1$, the number of intervals in $g(F)$ is still $k+1$.

Injectivity of $g$ is obvious. We prove surjectivity. Choose $E \in \mathcal{I}_{k+1,n+3}^{(2)}$ and consider $F := (E\  \backslash \{n+3\})-1.$ Then
$$\min F\ =\ \min E -1 \ \ge\ |E|-1\ =\ |F|.$$
Since $E \in \mathcal{I}_{k+1}$ and $\{n+2,n+3\}$ are in the last interval of $E$, the set $F$ is still in $\mathcal{I}_{k+1}$ with $\max F = n+1$. Hence, the set $F$ is in $\mathcal{I}_{k+1,n+1}$, and $g(F) = E$, which proves surjectivity of $g$.

We have shown that 
$$|\mathcal{J}_{k,n}| \ =\ |\mathcal{I}_{k+1,n+3}^{(1)}|\quad\mbox{ and }\quad |\mathcal{I}_{k+1,n+1}| \ =\ |\mathcal{I}_{k+1,n+3}^{(2)}|,$$
so
$$|\mathcal{J}_{k,n}|  + |\mathcal{I}_{k+1,n+1}|\ =\  |\mathcal{I}_{k+1,n+3}|.$$
\end{proof}

\begin{cor}\label{inductive}
For $k, n\in \mathbb{N}$, 
$$|\mathcal{J}_{k,n}| \ =\ |\mathcal{J}_{k+1, n+3}| - |\mathcal{J}_{k+1, n+2}| - |\mathcal{J}_{k+1, n+1}| + |\mathcal{J}_{k+1, n}|.$$
\end{cor}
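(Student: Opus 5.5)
The plan is to derive Corollary \ref{inductive} directly from Lemma \ref{inductive_lem}, using the identity $|\mathcal{I}_{k,n}| = |\mathcal{J}_{k,n}| - |\mathcal{J}_{k,n-1}|$ recorded at the start of this section, together with the convention $|\mathcal{J}_{k,0}|=0$.

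Lemma \ref{inductive_lem} states
$$|\mathcal{J}_{k,n}| \ =\ |\mathcal{I}_{k+1,n+3}| - |\mathcal{I}_{k+1,n+1}|.$$
Substituting the difference identity into each term on the right gives
$$|\mathcal{I}_{k+1,n+3}| \ =\ |\mathcal{J}_{k+1,n+3}| - |\mathcal{J}_{k+1,n+2}| \quad\mbox{and}\quad |\mathcal{I}_{k+1,n+1}| \ =\ |\mathcal{J}_{k+1,n+1}| - |\mathcal{J}_{k+1,n}|.$$
Subtracting the second equality from the first yields
$$|\mathcal{J}_{k,n}| \ =\ |\mathcal{J}_{k+1, n+3}| - |\mathcal{J}_{k+1, n+2}| - |\mathcal{J}_{k+1, n+1}| + |\mathcal{J}_{k+1, n}|,$$
which is exactly the claimed relation.

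The only point that needs care is the range of indices. Since $n\ge 1$, the indices $n+1$ and $n+3$ are at least $2$, so the difference identity is applied at indices $\ge 1$. The term $|\mathcal{J}_{k+1,n}|$ is therefore a genuine count for every $n\in\mathbb{N}$, and the convention $|\mathcal{J}_{k,0}|=0$ is never needed here. Beyond confirming this, I expect no real obstacle.
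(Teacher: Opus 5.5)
Your proposal is correct and follows the paper's proof: you rewrite Lemma \ref{inductive_lem} as $|\mathcal{J}_{k,n}| = |\mathcal{I}_{k+1,n+3}| - |\mathcal{I}_{k+1,n+1}|$ and then expand each $|\mathcal{I}_{k+1,\cdot}|$ as a difference of consecutive $|\mathcal{J}_{k+1,\cdot}|$ terms. Your index check is also right: the difference identity is only used at indices $n+1, n+3 \ge 2$, so the convention $|\mathcal{J}_{k,0}|=0$ is never needed.
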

\begin{proof}
    By Lemma \ref{inductive_lem}, we have
    $$|\mathcal{J}_{k,n}|  \ =\ |\mathcal{I}_{k+1, n+3}|- |\mathcal{I}_{k+1,n+1}|,$$
    which gives
    $$|\mathcal{J}_{k,n}| \ =\ |\mathcal{J}_{k+1, n+3}| - |\mathcal{J}_{k+1, n+2}| - |\mathcal{J}_{k+1, n+1}| + |\mathcal{J}_{k+1, n}|.$$
\end{proof}

\begin{proof}[Proof of Theorem \ref{m1}]
We proceed by induction. The base case $k = 1$ states that 
$(|\mathcal{J}_{1,n}|)_{n=1}^\infty$ satisfies the polynomial $(x-1)^3(x+1) = x^4-2x^3+2x-1$, i.e., 
$$|\mathcal{J}_n|-2|\mathcal{J}_{n-1}|+2|\mathcal{J}_{n-3}|-|\mathcal{J}_{n-4}|\ =\ 0,\mbox{ for all }n\ge 5,$$
which is confirmed by Corollary \ref{basecase}.

Let $p_k(x):=(x-1)^{2k+1}(x+1)^k = c_{3k+1}x^{3k+1} + c_{3k}x^{3k} + \cdots + c_{1}x + c_0$. 
Inductive hypothesis: suppose that, for some $k\ge 1$, the sequence $(|\mathcal{J}_{k,n}|)_{n=1}^\infty$ satisfies $p_k(x)$. Since $(|\mathcal{J}_{k, n}|)_{n=1}^\infty$ satisfies $p_k(x)$, 
$$\sum_{i=0}^{3k+1}c_{i}|\mathcal{J}_{k, i+1+j}| \ =\ 0,\mbox{ for all }j\ge 0.$$
By Corollary \ref{inductive},
$$\sum_{i=0}^{3k+1}c_{i}(|\mathcal{J}_{k+1,i+4+j}|-|\mathcal{J}_{k+1,i+3+j}| -  |\mathcal{J}_{k+1,i+2+j}| + |\mathcal{J}_{k+1,i+1+j}|) \ =\ 0,\mbox{ for all }j\ge 0.$$
Substituting $|\mathcal{J}_{k+1,i+1+j}|$ by $x^i$, we obtain
\begin{align*}
&\sum_{i=0}^{3k+1}c_{i}(|\mathcal{J}_{k+1,i+4+j}|-|\mathcal{J}_{k+1,i+3+j}| -  |\mathcal{J}_{k+1,i+2+j}| + |\mathcal{J}_{k+1,i+1+j}|)\\
&\ =\ \sum_{i=0}^{3k+1}c_{i}(x^{i+3}-x^{i+2} -  x^{i+1} + x^i)\\
&\ =\ \sum_{i=0}^{3k+1}c_{i}x^i(x-1)^2(x+1)\\
&\ =\ (x-1)^2(x+1)\sum_{i=0}^{3k+1}c_ix^i\ =\ (x-1)^2(x+1)p_k(x)\ =\ p_{k+1}(x),
\end{align*}
as claimed.
\end{proof}

\section{Recurrence from counting $k$-super-Schreier sets}\label{k-super-sect}
In this section, we study $k$-super-Schreier sets by computing the initial terms of $(|\mathcal{S}_{k,n}|)_{n=1}^\infty$ and establishing a linear recurrence for the sequence. Similar to the proof of Theorem \ref{m1}, we prove the recurrence by induction. To do so, our first task is to relate two consecutive rows of Table \ref{skn}, which is achieved in Proposition \ref{p1}.

\subsection{Initial counts}

\begin{prop}\label{p0}
For $k\in \mathbb{N}$ and $n\le 3k-3$, we have $|\mathcal{S}_{k, n}| = 0$.
\end{prop}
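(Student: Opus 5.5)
We argue directly from the definition by showing that every $k$-super-Schreier set has maximum at least $3k-2$. Since every $F\in\mathcal{S}_{k,n}$ satisfies $\max F = n$, this gives $\mathcal{S}_{k,n}=\emptyset$ whenever $n\le 3k-3$.

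Let $F$ be $k$-super-Schreier and write $m=|F|$, so $m\ge k$. Applying the defining inequality with $r=k$ (allowed because $|F|\ge k$, so $\min_k F$ exists) gives
$$\min_k F\ \ge\ |F| + 2k-2\ =\ m+2k-2.$$
The elements of $F$ larger than $\min_k F$ are exactly $\min_{k+1}F,\ldots,\min_m F$; there are $m-k$ of them, and they are distinct integers. Hence
$$\max F\ \ge\ \min_k F + (m-k)\ \ge\ (m+2k-2)+(m-k)\ =\ 2m+k-2.$$
Using $m\ge k$, we get $\max F\ge 3k-2$.

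If $F\in\mathcal{S}_{k,n}$, then $n=\max F\ge 3k-2$, which contradicts $n\le 3k-3$. Therefore $|\mathcal{S}_{k,n}|=0$ for all $n\le 3k-3$.

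There is no real obstacle here. The only points needing care are that $|F|\ge k$ guarantees $\min_k F$ is defined, and that $F\subset\{1,\ldots,n\}$ with $n\in F$ forces $\max F=n$. The same computation also shows that equality $\max F = 3k-2$ requires $m=k$, which is the starting point for the Catalan count at $n=3k-2$ in Theorem \ref{m2}.
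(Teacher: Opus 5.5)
Your proof is correct, but it takes a somewhat different route from the paper's. The paper argues extremally. It asserts that $\{1,\ldots,n\}$ contains a $k$-super-Schreier set if and only if $M=\{n-k+1,\ldots,n\}$ is $k$-super-Schreier. It then applies the $r=k$ condition to $M$, where $|M|=k$ and $\min_k M=n$, to get $n\ge 3k-2$.

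The reduction to $M$ is justified there only by noting that $k$-super-Schreier sets have at least $k$ elements and that $M$ consists of the $k$ largest integers. A rigorous version needs a monotonicity fact that the paper leaves implicit: deleting the smallest element of a $k$-super-Schreier set of size greater than $k$ keeps it $k$-super-Schreier, and replacing the elements of a $k$-element such set by elementwise larger ones preserves the property.

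You instead apply the $r=k$ condition to an arbitrary $F$ and count the $|F|-k$ distinct elements above $\min_k F$. This gives $\max F\ge 2|F|+k-2$, which is self-contained and needs no reduction step. It also gives more: the equality case forces $|F|=k$ for every $F\in\mathcal{S}_{k,3k-2}$. The paper re-derives exactly that fact by a separate contradiction argument at the start of the proof of Proposition \ref{p10}. What the paper's version offers in return is a concrete extremal witness $M$, at the cost of that unproved step.
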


\begin{proof}Note that $\{1, 2, \ldots, n\}$ contains a $k$-super-Schreier set if and only if $M := \{n-(k-1),\ldots,n-1, n\}$ is $k$-super-Schreier. This is because every $k$-super-Schreier set has at least $k$ elements, and $M$ consists of the $k$ largest integers in $\{1, 2, \ldots, n\}$. It follows that
$$n \ =\ \min_k M\ \ge\ |M| + 2k-2\ =\ 3k-2.$$
Hence, for $n\le 3k-3$, the set $\mathcal{S}_{k,n}$ is empty, i.e., $|\mathcal{S}_{k,n}| = 0$. 
\end{proof}

\begin{prop}\label{p10}
For $k\in \mathbb{N}$, we have 
$$|\mathcal{S}_{k, 3k-2}| \ =\ \frac{1}{k+1}\binom{2k}{k}.$$
\end{prop}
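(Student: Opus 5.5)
Let $n = 3k-2$ and let $F\in\mathcal{S}_{k,n}$ with $m := |F|\ge k$. The plan is to show that the conditions force $m=k$, and then to recast the remaining constraints as a ballot-type condition counted by the Catalan number.

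First I bound the size. Since $n\in F$ and $\min_k F\ge |F|+2k-2$, all elements from the $k$th smallest onward lie in $[m+2k-2,\,3k-2]$. There are $m-k+1$ such elements, and they are distinct, so
$$m-k+1\ \le\ (3k-2)-(m+2k-2)+1\ =\ k-m+1.$$
This gives $m\le k$, hence $m=k$. The same inequality with $m=k$ forces $\min_k F = \max F = 3k-2$. So $F=\{a_1<a_2<\cdots<a_k=3k-2\}$, and the only remaining constraints are
$$a_r\ \ge\ k+2r-2\quad\mbox{ for }1\le r\le k,$$
together with strict increase and $a_k = 3k-2$. The constraint for $r=k$ holds automatically.

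Next I normalize. Set $b_r := a_r-(k+2r-2)$, so $b_r\ge 0$ and $b_k=0$. The condition $a_{r+1}>a_r$ becomes $b_{r+1}\ge b_r-1$. It is cleaner to reverse the indexing: let $c_j := b_{k+1-j}$ for $1\le j\le k$. Then $c_1=0$, every $c_j\ge 0$, and $c_{j+1}\le c_j+1$. The reversed inequality comes from $b_{r}\le b_{r+1}+1$.

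Finally I count. Sequences $(c_1,\ldots,c_k)$ of nonnegative integers with $c_1=0$ and $c_{j+1}\le c_j+1$ are a standard Catalan family, counted by $C_k=\frac{1}{k+1}\binom{2k}{k}$. To make this self-contained I will give a bijection to Dyck paths of semilength $k$. The $j$th up-step is placed at height $c_j$: before each up-step, descend from the previous height $c_{j-1}+1$ down to $c_j$, and after the $k$th up-step descend to $0$. The condition $c_{j+1}\le c_j+1$ says exactly that the number of down-steps, $c_j+1-c_{j+1}$, is nonnegative. Nonnegativity of the $c_j$ keeps the path weakly above the axis. The inverse map reads off the starting height of each up-step.

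Alternatively, one could verify that the count $N(k,c)$ of such sequences ending at value $c$ satisfies the ballot recurrence, but the bijection is quicker. The main obstacle I expect is the reduction to $m=k$, and in particular getting the correct index bookkeeping in the inequality $a_{r+1}>a_r \Leftrightarrow b_{r+1}\ge b_r-1$. Both are routine once written out carefully.
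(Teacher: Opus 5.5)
Your proof is correct and is essentially the paper's argument. The paper also first forces $|F|=k$, via the one-line contradiction $3k-3\ge \min_k F\ge 3k-1$ rather than your count of the top $m-k+1$ elements. It then maps to a Catalan family using gap variables with the ballot condition $y_1+x_2+\cdots+x_r\ge r$. Your slack $b_r=a_r-(k+2r-2)$ is exactly the excess $y_1+x_2+\cdots+x_r-r$, so your Dyck path is the paper's lattice path read backwards. Your explicit bijection, with its inverse spelled out, is if anything more complete than the paper's appeal to the well-known count of lattice paths below the diagonal.
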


\begin{proof}
Every set in $\mathcal{S}_{k, 3k-2}$ must have exactly $k$ elements. Suppose, for a contradiction, that there is a set $F$ in $\mathcal{S}_{k,3k-2}$ with $|F| \ge k+1$. Then 
$$3k-3\ \ge\ \min_k F\ \ge\ |F| + 2k - 2\ \ge\ k + 1 + 2k-2\ =\ 3k - 1,$$
which is a contradiction. Hence, each set in $\mathcal{S}_{k,3k-2}$ can be written as
$\{a_1 < a_2 < \cdots < a_k = 3k-2\}$.
Let 
$$x_1 \ =\ a_1 - 1, \quad x_2 \ =\ a_2 - a_1 - 1 \quad, \ldots,\quad x_k\ =\ (3k-2) - a_{k-1} - 1.$$
Then 
$$x_1 + x_2 + \cdots + x_k\ =\ 2k-2,$$
and we want 
\begin{align*}
    &a_1\ =\ x_1 + 1\ \ge\ k \ \Longrightarrow\ x_1\ \ge\ k-1,\\
    &a_2\ =\ x_1 + x_2 + 2\ \ge\ k+2\ \Longrightarrow\ x_1 + x_2\ \ge\ k,\\
    &a_3\ =\ x_1 + x_2 + x_3 + 3\ \ge\ k+4\ \Longrightarrow\ \sum_{i=1}^3 x_i\ \ge\ k+1,\\
    &\vdots\\
    &a_{k-1}\ =\ \sum_{i=1}^{k-1}x_i + (k-1)\ \ge\ k + 2(k-1)-2\ \Longrightarrow\ \sum_{i=1}^{k-1} x_i\ \ge\ k+(k-1)-2.
\end{align*}
Let $y_1 := x_1 - (k-2)\ge 1$. Then each set $F$ in $\mathcal{S}_{k, 3k-2}$ corresponds to a solution $(y_1, x_2, \ldots, x_k)$ to the equation
$$y_1 + x_2 + \cdots + x_k\ =\ k,$$
under the condition 
$$y_1\ \ge\ 1, y_1 + x_2\ \ge\ 2, y_1 + x_2 + x_3\ \ge\ 3, \ldots, y_1 + x_2 + x_3 + \cdots + x_{k-1}\ \ge\ k-1.$$
It is well-known that the number of such solutions $(y_1, x_2, \ldots, x_k)$ is equal to the $k$th Catalan number $\binom{2k}{k}/(k+1)$. This is because the $k$th Catalan number counts the lattice paths from $(0,0)$ to $(k,k)$ that consist only of northward and eastward steps and never rise above the diagonal. If we consider $y_1$ the number of eastward steps before the first northward step, consider $x_2$ the number of eastward steps before the second northward step, and so on, then each solution $(y_1, x_2, \ldots, x_k)$ corresponds uniquely to a path. 
\end{proof}

\begin{lem}\label{l40}
For $k\in \mathbb{N}$, we have
$$|\mathcal{S}_{k, 3k-2}| + |\mathcal{S}_{k, 3k-1}| \ =\ |\mathcal{S}_{k+1, 3k+1}|.$$
\end{lem}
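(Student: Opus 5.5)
The plan is to construct an explicit bijection, as in the proof of Lemma \ref{inductive_lem}: split $\mathcal{S}_{k+1,3k+1}$ according to whether $3k$ belongs to the set, and identify each part with one of $\mathcal{S}_{k,3k-1}$ and $\mathcal{S}_{k,3k-2}$ via the map $G\mapsto (G\setminus\{3k+1\})-1$.

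\textbf{Step 1 (sizes are forced).} First show that every set in the three collections has the minimum possible size.
\begin{itemize}
\item For $G\in\mathcal{S}_{k+1,3k+1}$, suppose $|G|\ge k+2$. Then $\min_{k+1}G$ is not the maximum, so $\min_{k+1}G\le 3k$. But the super-Schreier condition requires $\min_{k+1}G\ge |G|+2k\ge 3k+2$, a contradiction. Hence $|G|=k+1$. Writing $G=\{a_1<\cdots<a_{k+1}=3k+1\}$, membership in $\mathcal{S}_{k+1,3k+1}$ is then equivalent to $a_r\ge k+2r-1$ for all $1\le r\le k+1$. The condition at $r=k+1$ holds automatically.
\item For $F\in\mathcal{S}_{k,3k-1}$, the same argument applies. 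If $|F|\ge k+1$, then $\min_kF\le 3k-2$, whereas the condition demands $\min_kF\ge 3k-1$. Hence $|F|=k$.
\item For $\mathcal{S}_{k,3k-2}$, Proposition \ref{p10} already shows $|F|=k$.
\end{itemize}
So every $F$ in $\mathcal{S}_{k,3k-1}\cup\mathcal{S}_{k,3k-2}$ has the form $\{b_1<\cdots<b_k\}$ with $b_r\ge k+2r-2$ for all $r\le k$.

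\textbf{Step 2 (split and shift).} Let $\mathcal{A}=\{G\in\mathcal{S}_{k+1,3k+1}: 3k\in G\}$ and $\mathcal{B}=\{G\in\mathcal{S}_{k+1,3k+1}: 3k\notin G\}$. For $G\in\mathcal{A}\cup\mathcal{B}$, define $h(G):=(G\setminus\{3k+1\})-1$.
\begin{itemize}
\item If $G\in\mathcal{A}$, then $h(G)$ has $k$ elements and maximum $3k-1$.
\item If $G\in\mathcal{B}$, then $a_k\le 3k-1$, while the condition at $r=k$ gives $a_k\ge 3k-1$. So $a_k=3k-1$, and $h(G)$ has $k$ elements and maximum $3k-2$.
\end{itemize}
In both cases, the conditions $a_r\ge k+2r-1$ for $r\le k$ become exactly $b_r=a_r-1\ge k+2r-2$. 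By Step 1, these are precisely the conditions for membership in $\mathcal{S}_{k,3k-1}$ and $\mathcal{S}_{k,3k-2}$, respectively.

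\textbf{Step 3 (inverse).} The inverse map is $F\mapsto (F+1)\cup\{3k+1\}$. For $F\in\mathcal{S}_{k,3k-1}$, the image contains $3k$. For $F\in\mathcal{S}_{k,3k-2}$, the image has second-largest element $3k-1$, so it omits $3k$. The inequalities transfer back as in Step 2, and the new top element $3k+1$ satisfies its condition trivially. Thus $h$ gives bijections $\mathcal{A}\to\mathcal{S}_{k,3k-1}$ and $\mathcal{B}\to\mathcal{S}_{k,3k-2}$, and summing the two counts yields the lemma.

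The only genuinely delicate point is Step 1, namely ruling out sets with more than the minimal number of elements. Once sizes are pinned down, the super-Schreier inequalities become a fixed list of lower bounds $a_r\ge k+2r-1$ that are visibly preserved by the shift. I also need to watch the bookkeeping that the $r=k$ condition forces $a_k=3k-1$ in $\mathcal{B}$, which is what makes the image land in $\mathcal{S}_{k,3k-2}$ and not in some smaller-maximum collection.
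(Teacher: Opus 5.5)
Your proof is correct and is essentially the paper's argument. The paper splits $\mathcal{S}_{k+1,3k+1}$ according to whether $\max_2 F = 3k$ (equivalently, $3k\in F$), and it uses the same shift maps $F\mapsto (F+1)\cup\{3k+1\}$; your Step 1 additionally makes explicit that sets in $\mathcal{S}_{k,3k-1}$ have exactly $k$ elements, which the paper needs but leaves implicit when it omits the verification for $g$, and you pin down the maximum $3k-2$ via the $r=k$ condition where the paper invokes Proposition \ref{p0}.
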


\begin{proof}
    We partition $\mathcal{S}_{k+1, 3k+1}$ into 
    $$\mathcal{S}'_{k+1, 3k+1}\ :=\ \{F\subset \mathcal{S}_{k+1, 3k+1}\,:\, \max_2 F < 3k\}$$
     and 
     $$\mathcal{S}''_{k+1, 3k+1}\ :=\ \{F\subset \mathcal{S}_{k+1, 3k+1}\,:\, \max_2 F = 3k\}.$$
     
     Define $f: \mathcal{S}_{k, 3k-2} \rightarrow\mathcal{S}'_{k+1, 3k+1}$ as $f(F) = (F + 1)\cup \{3k+1\}$. As shown in the proof of Proposition \ref{p10}, a set $F\in \mathcal{S}_{k, 3k-2}$ has exactly $k$ elements. Then for integers $r\in [1, k]$,
     $$\min_r f(F)\ =\ \min_r F + 1\ \ge\ |F| + 2r-2 + 1\ =\ |f(F)| + 2r -2,$$
     and
     $$\min_{k+1} f(F)\ =\ 3k+1 \ =\ |f(F)| + 2(k+1)-2.$$
     Hence, the set $f(F)$ is $(k+1)$-super-Schreier. Furthermore,
     $$\max f(F)\ =\ 3k+1 \mbox{ and }\max_2 f(F)\ =\ 3k-2 + 1\ =\ 3k-1 \ <\ 3k.$$
     These verify that $f$ is well-defined. Injectivity of $f$ is clear from definition. We show surjectivity. Pick $E \in \mathcal{S}'_{k+1, 3k+1}$ and let $F:= E\backslash \{3k+1\}-1$. Then $\max F\le \max_2 E - 1 \le 3k-2$. 
     For  integers $r\in [1, k]$,
     $$\min_r F\ =\ \min_r E -1 \ \ge\ |E|+2r-2-1\ =\ |F| + 2r-2,$$
     so $F$ is $k$-super-Schreier. We know that $\max F = 3k-2$ because $\max F \le 3k-3$ contradicts Proposition \ref{p0}. These show that $F\in \mathcal{S}_{k, 3k-2}$. Since $f(F) = E$, the function $f$ is surjective.

    Next, define $g: \mathcal{S}_{k, 3k-1}\rightarrow \mathcal{S}''_{k+1, 3k+1}$ as $g(F) = (F+1)\cup \{3k+1\}$. Similar as above, one can easily verify that $g$ is a bijection. We, therefore, omit the proof.
\end{proof}

\begin{cor}\label{i_c}
    For $k\in \mathbb{N}$, we have 
    $$|\mathcal{S}_{k, 3k-1}| = \frac{3}{2k+1}\binom{2k+1}{k+2}.$$
\end{cor}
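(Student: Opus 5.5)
The plan is to combine Lemma \ref{l40} with Proposition \ref{p10}. Applying Lemma \ref{l40} with $k$ gives
$$|\mathcal{S}_{k,3k-1}| \ =\ |\mathcal{S}_{k+1,3(k+1)-2}| - |\mathcal{S}_{k,3k-2}|.$$
Proposition \ref{p10}, applied once with $k+1$ and once with $k$, identifies both terms on the right as Catalan numbers. Hence
$$|\mathcal{S}_{k,3k-1}| \ =\ C_{k+1} - C_k \ =\ \frac{1}{k+2}\binom{2k+2}{k+1} - \frac{1}{k+1}\binom{2k}{k}.$$
Everything then reduces to the binomial identity
$$C_{k+1}-C_k \ =\ \frac{3}{2k+1}\binom{2k+1}{k+2}.$$

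To verify this identity, I will write everything as a rational multiple of $\binom{2k}{k}$. First, $C_{k+1} = \frac{2(2k+1)}{k+2}C_k$, so
$$C_{k+1}-C_k = C_k\cdot\frac{4k+2-k-2}{k+2} = \frac{3k}{(k+1)(k+2)}\binom{2k}{k}.$$
For the right side, I will use $\binom{2k+1}{k+2} = \frac{(2k+1)!}{(k+2)!(k-1)!}$. This equals $\binom{2k}{k}\cdot\frac{(2k+1)\,k}{(k+2)(k+1)}$, since $\frac{(2k+1)!}{(k+2)!(k-1)!} = \frac{(2k)!}{k!k!}\cdot\frac{(2k+1)k}{(k+1)(k+2)}$. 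Multiplying by $\frac{3}{2k+1}$ gives $\frac{3k}{(k+1)(k+2)}\binom{2k}{k}$, which matches the left side.

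Before all this, I will check the edge case $k=1$. There the formula gives $\frac{3}{3}\binom{3}{3}=1$, which matches $|\mathcal{S}_{1,2}|=1$ and agrees with $C_2-C_1=1$. So no separate treatment is needed.

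No step here is a genuine obstacle. The only point requiring care is the factorial bookkeeping in the identity, and in particular remembering that Lemma \ref{l40} must be invoked at index $k$ while Proposition \ref{p10} is invoked at index $k+1$.
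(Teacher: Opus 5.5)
Your proposal is correct and follows essentially the same route as the paper: both use Lemma \ref{l40} and Proposition \ref{p10} (at $k$ and at $k+1$) to obtain $|\mathcal{S}_{k,3k-1}| = C_{k+1}-C_k$, and then verify the binomial identity. The only difference is bookkeeping: you normalize against $\binom{2k}{k}$ via $C_{k+1}=\frac{2(2k+1)}{k+2}C_k$, while the paper writes both Catalan numbers as multiples of $\frac{(2k+1)!}{(k+2)!(k-1)!}$. Your algebra is correct.
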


\begin{proof}
    By Proposition \ref{p10} and Lemma \ref{l40}, we have
    \begin{align*}
        |\mathcal{S}_{k, 3k-1}|&\ =\ |\mathcal{S}_{k+1, 3k+1}|-|\mathcal{S}_{k, 3k-2}|\\
        &\ =\ \frac{1}{k+2}\binom{2k+2}{k+1}- \frac{1}{k+1}\binom{2k}{k}\\
        &\ =\ \frac{1}{k+2}\frac{(2k+2)!}{(k+1)!(k+1)!} - \frac{1}{k+1}\frac{(2k)!}{k!k!}\\
        &\ =\ \frac{2k+2}{k(k+1)}\frac{(2k+1)!}{(k+2)!(k-1)!} - \frac{k+2}{k(2k+1)}\frac{(2k+1)!}{(k+2)!(k-1)!}\\
        &\ =\ \frac{3}{2k+1}\binom{2k+1}{k+2}.
    \end{align*}
\end{proof}

\subsection{Linear recurrence}

The following key result gives a relationship between every two consecutive rows of Table \ref{skn}, which is used in the inductive step of our proof of \eqref{e10}.
\begin{prop}\label{p1}For $n\ge 3k-2$, 
     \begin{align}\label{e1}(|\mathcal{S}_{k, n+3}| - |\mathcal{S}_{k, n+2}| - |\mathcal{S}_{k, n+1}|) + &(|\mathcal{S}_{k+1, n+4}| - |\mathcal{S}_{k+1, n+3}| - |\mathcal{S}_{k+1, n+2}|)\nonumber\\
     &\ = \  |\mathcal{S}_{k+1, n+5}| - |\mathcal{S}_{k+1, n+4}| - |\mathcal{S}_{k+1, n+3}|.
     \end{align}
\end{prop}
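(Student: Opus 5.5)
Throughout, write $D_k(m) := |\mathcal{S}_{k,m}| - |\mathcal{S}_{k,m-1}| - |\mathcal{S}_{k,m-2}|$, with the convention that $|\mathcal{S}_{k,m}| = 0$ for $m\le 0$. Let $a_k(m)$ be the number of $k$-super-Schreier sets with exactly $k$ elements and maximum $m$, and put $A_k(M) := \sum_{m\le M} a_k(m)$. In this notation \eqref{e1} reads $D_k(n+3) + D_{k+1}(n+4) = D_{k+1}(n+5)$. The plan is to express both $D_{k+1}$ and the increments of $a_k$ through the "remove the maximum and shift down by one" map, which is the same map used in Lemma \ref{l40}.

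\emph{Step 1: a recursion for $|\mathcal{S}_{k+1,N}|$.} For $E\in\mathcal{S}_{k+1,N}$, set $F := (E\setminus\{N\}) - 1$. Since $\min_r F = \min_r E - 1 \ge |F| + 2r - 2$, there are two cases.
\begin{itemize}
\item If $|E|\ge k+2$, then $F$ is $(k+1)$-super-Schreier with $\max F \le N-2$.
\item If $|E| = k+1$, then $F$ is a $k$-element $k$-super-Schreier set with $\max F\le N-2$.
\end{itemize}
Conversely, $E = (F+1)\cup\{N\}$ recovers $E$. In the first case the $(k+1)$st condition for $E$ follows from the one for $F$. In the second case it reads $N \ge 3k+1$, which holds in our range. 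This should give
$$|\mathcal{S}_{k+1,N}| \ =\ \sum_{m\le N-2}|\mathcal{S}_{k+1,m}| + A_k(N-2) \qquad (N\ge 3k+1).$$
Subtracting the same identity at $N-1$ yields $D_{k+1}(N) = a_k(N-2)$ for $N\ge 3k+2$.

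\emph{Step 2: increments of $a_k$.} Apply the same map to a $k$-element set $E$ with maximum $m$. Then $F$ is a $(k-1)$-element $(k-1)$-super-Schreier set with $\max F\le m-2$, because $|F| = k-1$ absorbs the shift. The only extra condition on $E$ is $\min_k E = m \ge 3k-2$. Hence $a_k(m) = A_{k-1}(m-2)$, and so $a_k(m) - a_k(m-1) = a_{k-1}(m-2)$ for $m \ge 3k-1$ and $k\ge 2$. By Step 1 with $k$ replaced by $k-1$, the right side equals $D_k(m)$ once $m \ge 3k-1$. For $k=1$ we have $a_1(m) = 1$, so the increment is $0$, and $D_1(m) = 0$ by Bird's Fibonacci result (the row $|\mathcal{S}_{1,n}|$).

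\emph{Step 3: combine.} With $N = n+4$ and $N = n+5$, Step 1 gives $D_{k+1}(n+4) = a_k(n+2)$ and $D_{k+1}(n+5) = a_k(n+3)$. Step 2 with $m = n+3$ gives $a_k(n+3) - a_k(n+2) = D_k(n+3)$. Together these are exactly \eqref{e1}. The hypothesis $n\ge 3k-2$ ensures every threshold is met: $n+4\ge 3k+2$ and $n+3\ge 3k+1\ge 3k-1$.

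The main obstacle will be the bookkeeping in Step 1. One must check that the two preimage types really partition $\mathcal{S}_{k+1,N}$, and that the constraint $\min_{k+1}E\ge |E|+2k$ holds automatically in the first case. One must also track the small-$m$ conventions, namely empty $\mathcal{S}$'s and the requirement $|F|\ge k+1$ in the first case, so that the telescoping sums are valid at the boundary of the range $n\ge 3k-2$.
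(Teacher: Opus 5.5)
Your proof is correct, and it reaches \eqref{e1} by a different route from the paper's.

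The paper works entirely with explicit bijections between finite sets:
\begin{itemize}
\item It splits $\mathcal{S}_{k,n}$ according to $\max_2 F$ and $|F|$. Using $\Psi_{k,n}$ (move the maximum from $n-1$ to $n$) and $\Phi_{k,n}$ (shift up and append $n$), it obtains $D_k(n)=|\mathcal{S}^{(2)}_{k,n}|$ (Corollary \ref{p2}).
\item It then proves $|\mathcal{S}^{(2)}_{k,n+3}|+|\mathcal{S}^{(2)}_{k+1,n+4}|=|\mathcal{S}^{(2)}_{k+1,n+5}|$ by splitting on $\max_3$. There it uses $\xi_{k,n}$ (replace $n+3$ by $n+5$) together with the shift-and-append map $\gamma_{k,n}$.
\end{itemize}

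You use only the delete-the-maximum-and-shift-down map. You record its effect as the cumulative identities $|\mathcal{S}_{k+1,N}|=\sum_{m\le N-2}|\mathcal{S}_{k+1,m}|+A_k(N-2)$ and $a_k(m)=A_{k-1}(m-2)$. Differencing then does the work of the paper's two replace-type bijections $\Psi_{k,n}$ and $\xi_{k,n}$. The dictionary between the two proofs is $a_k(N-2)=|\mathcal{S}^{(2)}_{k+1,N}|$ for $N\ge 3k+1$. Under it, your Step 2 identity $a_k(m)-a_k(m-1)=a_{k-1}(m-2)$ is exactly \eqref{e2}.

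The bookkeeping you flagged as the main obstacle does go through:
\begin{itemize}
\item The two cases in Step 1 are distinguished by $|E|$.
\item In the first case, $|F|\ge k+1$ gives $\min_{k+1}E=\min_{k+1}F+1$.
\item Your thresholds are right: $N\ge 3k+1$ for the cumulative identity, and $N\ge 3k+2$ after differencing.
\end{itemize}

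What each route buys:
\begin{itemize}
\item Your version needs a single uniform verification. It also gives $D_{k+1}(N)$ a clean meaning: the number of $k$-element $k$-super-Schreier sets with maximum $N-2$. The relation $a_k(m)=A_{k-1}(m-2)$ is essentially the summed form that the paper only extracts later, by iterating \eqref{e1}, in Corollary \ref{c10}.
\item The paper's version avoids partial sums and boundary conventions altogether. It also handles $k=1$ internally, since $\mathcal{S}^{(2)}_{1,n}=\emptyset$, rather than appealing to Bird's theorem.
\end{itemize}

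You could make your argument equally self-contained by running Step 1 on ordinary Schreier sets. This gives $|\mathcal{S}_{1,N}|=1+\sum_{m\le N-2}|\mathcal{S}_{1,m}|$, and hence $D_1(N)=0$ for $N\ge 3$.
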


For $k, n\in \mathbb{N}$, let
\begin{align*}\mathcal{S}^{(1)}_{k, n}&\ :=\ \{F\in \mathcal{S}_{k, n}\,:\, \mbox{either }|F| = 1\mbox{ or }\max_2 F \le n-2\},\\
\mathcal{S}^{(2)}_{k, n}&\ :=\ \{F\in \mathcal{S}_{k, n}\,:\,|F| \ge 2, \max_2 F = n-1, |F| = k\},\mbox{ and }\\
\mathcal{S}^{(3)}_{k, n}&\ :=\ \{F\in \mathcal{S}_{k, n}\,:\,|F| \ge 2, \max_2 F = n-1, |F| \ge k+1\}\\
&\ =\ \{F\in \mathcal{S}_{k, n}\,:\, \max_2 F = n-1, |F| \ge k+1\}.
\end{align*}
Then $(\mathcal{S}^{(1)}_{k,n}, \mathcal{S}^{(2)}_{k,n}, \mathcal{S}^{(3)}_{k,n})$ partitions $\mathcal{S}_{k,n}$.
Define
\begin{align*}
&\Psi_{k,n}: \mathcal{S}_{k, n-1}\rightarrow \mathcal{S}^{(1)}_{k, n}\quad \mbox{ as }\quad \Psi_{k,n}(F)\ =\ (F\backslash \{n-1\})\cup \{n\}\mbox{ and}\\
&\Phi_{k,n}:  \mathcal{S}_{k, n-2}\rightarrow \mathcal{S}^{(3)}_{k, n}\quad \mbox{ as }\quad
\Phi_{k,n}(F)\ =\ (F+1)\cup\{n\}.
\end{align*}
\begin{lem}\label{l1}
 For $k\ge 1$ and $n\ge 3k-1$, the map $\Psi_{k,n}:\mathcal{S}_{k, n-1}\rightarrow \mathcal{S}^{(1)}_{k, n}$  is a bijection.
\end{lem}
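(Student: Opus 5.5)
The plan is to verify directly that $\Psi_{k,n}$ is well-defined, injective and surjective. The inverse map will be $E\mapsto (E\backslash\{n\})\cup\{n-1\}$. The only delicate point is the super-Schreier inequality at index $r=k$ when the maximum is among the $k$ smallest elements. That is exactly where the hypothesis $n\ge 3k-1$ is used.

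\emph{Well-definedness.} Let $F\in\mathcal{S}_{k,n-1}$, so $\max F=n-1$, and put $E:=\Psi_{k,n}(F)$. Replacing the maximum $n-1$ by $n$ keeps the cardinality, so $|E|=|F|\ge k$. The order of the elements is unchanged: $\min_r E=\min_r F$ whenever $\min_r F<n-1$, and in the remaining case the $r$th smallest element goes up from $n-1$ to $n$. Hence
$$\min_r E\ \ge\ \min_r F\ \ge\ |F|+2r-2\ =\ |E|+2r-2, \quad r\in[1,k],$$
and $E$ is $k$-super-Schreier with $n\in E$. Moreover, either $|E|=1$, or $\max_2 E=\max_2 F\le n-2$. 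Thus $E\in\mathcal{S}^{(1)}_{k,n}$. Injectivity is immediate, because $F$ is recovered by replacing $n$ back with $n-1$.

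\emph{Surjectivity.} Take $E\in\mathcal{S}^{(1)}_{k,n}$ and set $F:=(E\backslash\{n\})\cup\{n-1\}$. Since either $|E|=1$ or $\max_2 E\le n-2$, the element $n-1$ is not already in $E$. Therefore $|F|=|E|\ge k$, $\max F=n-1$, $F\subset\{1,\ldots,n-1\}$, and $\Psi_{k,n}(F)=E$. It remains to check $\min_r F\ge |F|+2r-2$ for $r\in[1,k]$. If $\min_r E<n$, then $\min_r F=\min_r E$ and there is nothing to prove. Otherwise $n$ is the $r$th smallest element of $E$. Since $|E|\ge k\ge r$ and $n=\max E$, this forces $r=|E|=k$. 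We then need
$$\min_k F\ =\ n-1\ \ge\ |F|+2k-2\ =\ 3k-2,$$
which is exactly the hypothesis $n\ge 3k-1$. The case $|E|=1$ falls under this too: it forces $k=1$, where the condition reads $n-1\ge 1$.

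So the main (and only) obstacle is this boundary case $|E|=k$ with $\min_k E=n$. Lowering $n$ to $n-1$ could break the $r=k$ inequality there, and the assumption $n\ge 3k-1$ is precisely what rules that out.
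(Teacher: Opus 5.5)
Your proof is correct and follows the paper's argument essentially step for step. You use the same inverse $E\mapsto (E\backslash\{n\})\cup\{n-1\}$, and the only nontrivial check is again the case $r=k=|E|$, where $n\ge 3k-1$ gives $\min_k F=n-1\ge 3k-2$; you are slightly more explicit than the paper in noting that $n-1\notin E$, so that $|F|=|E|$.
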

\begin{proof}
Let $F\in \mathcal{S}_{k, n-1}$. If $|F| = 1$, then $k = 1$, $F = \{n-1\}$, and $\Psi_{1, n}(F) = \{n\}$, which is in $\mathcal{S}^{(1)}_{1, n}$. If $|F|\ge 2$, then $\max_2 F\le n-2$.
    Since $\Psi_{k,n}$ increases $\max F$ by $1$, the set $\Psi_{k,n}(F)$ is still $k$-super-Schreier. Furthermore, we have $n\in \Psi_{k,n}(F)$, and $\max_2 \Psi_{k,n}(F) = \max_2 F \le n-2$. Hence, we have $\Psi_{k,n}(F)\in \mathcal{S}^{(1)}_{k, n}$, and the map $\Psi_{k,n}$ is well-defined.

    It follows immediately from the definition that $\Psi_{k,n}$ is injective. We show that
    $\Psi_{k,n}$ is surjective. Pick $E\in \mathcal{S}^{(1)}_{k, n}$. Let $F := (E\backslash \{n\})\cup \{n-1\}$. If $|E| = 1$, then $k = 1$ and $E = \{n\}$. We have $F = \{n-1\}\in \mathcal{S}_{1,n-1}$. Assume that $|E|\ge 2$ to have $\max_2 E \le n-2$. Note that $\max F = n-1$, and for every integer $r<k$, 
    $$\min_r F \ =\ \min_r E\ \ge\ |E| + 2r - 2\ =\ |F| + 2r-2.$$
    If $k < |F|$, then
    $$\min_k F\ =\ \min_k E \ \ge\ |E| + 2k-2\ =\ |F| + 2k-2.$$
    If $k = |F|$, then  
    $$\min_k F\ =\ n-1\ \ge\ 3k-2\ =\ |F| + 2k-2.$$
    Therefore, the set $F$ is $k$-super-Schreier and thus, is in $\mathcal{S}_{k, n-1}$. Since $\Psi_{k,n}(F) = E$, the map $\Psi_{k,n}$ is surjective. 
\end{proof}

\begin{lem}\label{l2}
 For $k\ge 1$ and $n\ge 3$, the map $\Phi_{k,n}:\mathcal{S}_{k, n-2}\rightarrow \mathcal{S}^{(3)}_{k, n}$  is a bijection.
\end{lem}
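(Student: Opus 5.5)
We verify directly that $\Phi_{k,n}$ is well-defined, that it is injective, and that the map $E\mapsto (E\backslash\{n\})-1$ sends $\mathcal{S}^{(3)}_{k,n}$ back into $\mathcal{S}_{k,n-2}$. This mirrors the bijections $f,g$ in Lemma \ref{l40} and $\Psi_{k,n}$ in Lemma \ref{l1}. The one point needing care is comparing the $r$th smallest elements of $F$ and of $(F+1)\cup\{n\}$. This works only when $r\le |F|$, so that the appended element $n$ never becomes one of the first $k$ smallest elements.

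\emph{Well-definedness.} Let $F\in\mathcal{S}_{k,n-2}$ and $E:=\Phi_{k,n}(F)=(F+1)\cup\{n\}$. Since $\max F=n-2$, we have $\max(F+1)=n-1<n$. Hence $|E|=|F|+1$, $\max E=n$ and $\max_2 E=n-1$. Because $F$ is $k$-super-Schreier, $|F|\ge k$, so $|E|\ge k+1\ge 2$. For each integer $r\in[1,k]$ we have $r\le |F|$, so the $r$th smallest element of $E$ lies in $F+1$. Therefore
$$\min_r E\ =\ \min_r F+1\ \ge\ |F|+2r-2+1\ =\ |E|+2r-2 .$$
Thus $E$ is $k$-super-Schreier with $n\in E$, $\max_2 E=n-1$ and $|E|\ge k+1$, i.e.\ $E\in\mathcal{S}^{(3)}_{k,n}$.

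\emph{Injectivity.} This is immediate, since $F$ is recovered from $E$ by removing $n$ and subtracting $1$.

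\emph{Surjectivity.} Take $E\in\mathcal{S}^{(3)}_{k,n}$ and set $F:=(E\backslash\{n\})-1$.
\begin{itemize}
\item $F\subset\mathbb{N}$: by the Schreier condition with $r=1$, $\min E\ge |E|\ge k+1\ge 2$, so every element of $F$ is at least $1$.
\item Maximum: since $\max_2 E=n-1$, we get $\max F=n-2$.
\item Size: $|F|=|E|-1\ge k$. This is exactly where the defining condition $|E|\ge k+1$ of $\mathcal{S}^{(3)}_{k,n}$ is used; it is what separates $\mathcal{S}^{(3)}_{k,n}$ from $\mathcal{S}^{(2)}_{k,n}$.
\item Super-Schreier inequalities: for $r\in[1,k]$ we have $r\le |F|$, so $\min_r E$ is an element of $E\backslash\{n\}$. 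Hence
$$\min_r F\ =\ \min_r E-1\ \ge\ |E|+2r-3\ =\ |F|+2r-2 .$$
\end{itemize}
So $F\in\mathcal{S}_{k,n-2}$ and $\Phi_{k,n}(F)=E$.

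The hypothesis $n\ge 3$ only ensures that $\mathcal{S}_{k,n-2}$ is defined with $n-2\ge 1$. I expect no real obstacle: the only subtle step is the index bookkeeping $r\le k\le |F|$, which guarantees that adding or removing the top element $n$ does not change which elements are the $r$th smallest for $r\le k$.
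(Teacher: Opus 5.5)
Your proof is correct and follows the paper's argument essentially step for step. You check that $(F+1)\cup\{n\}$ lies in $\mathcal{S}^{(3)}_{k,n}$ via $\min_r \Phi_{k,n}(F)=\min_r F+1$ for $r\le k$, and you invert with $E\mapsto (E\backslash\{n\})-1$, using $|E|\ge k+1$ so that $n$ is never among the $k$ smallest elements. You also spell out points the paper leaves implicit: that $F\subset\mathbb{N}$, that $\max F=n-2$ because $\max_2 E=n-1$, and that $|F|\ge k$.
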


\begin{proof}
    Let $F\in \mathcal{S}_{k, n-2}$. Then $\max \Phi_{k,n}(F) = n$, and 
    for $r \le k$, 
    $$\min_r \Phi_{k,n}(F)\ =\ \min_r F + 1\ \ge\ |F| + 2r-2 + 1\ =\ |\Phi_{k,n}(F)| + 2r-2.$$
    Hence, the set $\Phi_{k,n}(F)$ is in $\mathcal{S}_{k,n}$. Furthermore, we have 
    $$\max_2 \Phi_{k,n}(F)\ =\ n-1\quad \mbox{ and }\quad |\Phi_{k,n}(F)| \ =\ |F| + 1\ \ge\ k + 1.$$ 
    Therefore, the set $\Phi_{k,n}(F)$ is in $\mathcal{S}^{(3)}_{k,n}$, and $\Phi_{k,n}$ is well-defined. 

    Clearly, the map $\Phi_{k,n}$ is injective by definition. We show that $\Phi_{k,n}$ is surjective. Let $E\in \mathcal{S}^{(3)}_{k, n}$ and define $F := (E\backslash \{n\})-1$. 
    Note that $|E|\ge k+1$, so for all integers $r\in [1, k]$, we have $\min_r F\ =\ \min_r E - 1$. Therefore, for 
    $1\le r \le k$,
    $$\min_r F\ =\ \min_r E - 1\ \ge\ |E| + 2r-2 - 1\ =\ |F| + 2r-2.$$
    Hence, the set $F$ is in $\mathcal{S}_{k, n-2}$.
    Since $\Phi_{k,n}(F) = E$, we have that $\Phi_{k,n}$ is surjective.
\end{proof}

\begin{cor}\label{p2}For $k\ge 1$ and $n\ge \max\{3, 3k-1\}$, we have
$$|\mathcal{S}_{k,n}| - |\mathcal{S}_{k,n-1}| - |\mathcal{S}_{k,n-2}| \ =\ |\mathcal{S}^{(2)}_{k, n}|.$$
\end{cor}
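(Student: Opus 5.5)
The plan is to read the identity off the partition $\mathcal{S}_{k,n} = \mathcal{S}^{(1)}_{k,n}\sqcup \mathcal{S}^{(2)}_{k,n}\sqcup \mathcal{S}^{(3)}_{k,n}$ recorded just before Lemma \ref{l1}, and then apply the two bijections of Lemmas \ref{l1} and \ref{l2}. Since the three classes are pairwise disjoint and cover $\mathcal{S}_{k,n}$, we have
$$|\mathcal{S}_{k,n}| \ =\ |\mathcal{S}^{(1)}_{k,n}| + |\mathcal{S}^{(2)}_{k,n}| + |\mathcal{S}^{(3)}_{k,n}|.$$

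First I will confirm the partition. The defining conditions are mutually exclusive and exhaustive: either $|F|=1$ or $\max_2 F\le n-2$; otherwise $|F|\ge 2$ and $\max_2 F=n-1$. In the second situation the size satisfies $|F|\ge k$ by the definition of $k$-super-Schreier, so exactly one of $|F|=k$ or $|F|\ge k+1$ holds. One edge case deserves a check: when $|F|=1$ we must have $k=1$, and $F$ then lies only in $\mathcal{S}^{(1)}_{k,n}$, since classes $(2)$ and $(3)$ require $|F|\ge 2$.

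Next, I will invoke the bijections. For $n\ge 3k-1$, Lemma \ref{l1} gives $|\mathcal{S}^{(1)}_{k,n}|=|\mathcal{S}_{k,n-1}|$. For $n\ge 3$, Lemma \ref{l2} gives $|\mathcal{S}^{(3)}_{k,n}|=|\mathcal{S}_{k,n-2}|$. The hypothesis $n\ge\max\{3,3k-1\}$ is exactly what makes both lemmas available at the same time. Substituting these two equalities into the displayed decomposition and rearranging yields
$$|\mathcal{S}_{k,n}|-|\mathcal{S}_{k,n-1}|-|\mathcal{S}_{k,n-2}| \ =\ |\mathcal{S}^{(2)}_{k,n}|.$$

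There is no real obstacle here, since the work was done in Lemmas \ref{l1} and \ref{l2}. The only points needing care are the disjointness and exhaustiveness of the partition in the degenerate case $k=1$, $|F|=1$, and checking that the range of $n$ matches the hypotheses of both lemmas.
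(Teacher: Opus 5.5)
Your proposal is correct and matches the paper's proof: split $|\mathcal{S}_{k,n}|$ using the three-part partition, then replace $|\mathcal{S}^{(1)}_{k,n}|$ by $|\mathcal{S}_{k,n-1}|$ and $|\mathcal{S}^{(3)}_{k,n}|$ by $|\mathcal{S}_{k,n-2}|$ via Lemmas \ref{l1} and \ref{l2}. You also check explicitly that the partition is valid, including the case $k=1$, $|F|=1$; the paper only asserts this.
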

\begin{proof}
    By Lemmas \ref{l1} and \ref{l2}, we have
  $$|\mathcal{S}_{k, n}| - |\mathcal{S}_{k,n-1}| - |\mathcal{S}_{k,n-2}|\\
        \ =\ |\mathcal{S}^{(1)}_{k, n}| +|\mathcal{S}^{(2)}_{k, n}| + |\mathcal{S}^{(3)}_{k, n}|  - |\mathcal{S}_{k,n-1}| - |\mathcal{S}_{k,n-2}|\ =\ |\mathcal{S}^{(2)}_{k,n}|.$$
\end{proof}

Thanks to Corollary \ref{p2}, to prove \eqref{e1} is the same as to prove
\begin{equation}\label{e2}|\mathcal{S}^{(2)}_{k, n+3}| + |\mathcal{S}^{(2)}_{k+1, n+4}|\ =\ |\mathcal{S}^{(2)}_{k+1, n+5}|,\mbox{ for all }n\ge 3k-2.\end{equation}
Note that if $k = 1$, it follows from the definition of $\mathcal{S}^{(2)}_{k,n}$ that
$$|\mathcal{S}^{(2)}_{1, n+3}|\ =\ 0\mbox{ and }|\mathcal{S}^{(2)}_{2, n+4}| \ =\ |\mathcal{S}^{(2)}_{2, n+5}|\ =\ 1,$$
so \eqref{e2} holds for $k = 1$. Assume that $k \ge 2$ so that sets in $\mathcal{S}^{(2)}_{k+1, n+5}$ have at least $3$ elements. To prove \eqref{e2}, we partition $\mathcal{S}^{(2)}_{k+1, n+5}$ into 
$$\mathcal{S}^{(2,1)}_{k+1, n+5}\ :=\ \{F\in \mathcal{S}^{(2)}_{k+1, n+5}\,:\, \max_3 F\le n+2\}$$
and
$$\mathcal{S}^{(2,2)}_{k+1, n+5}\ :=\ \{F\in \mathcal{S}^{(2)}_{k+1, n+5}\,:\, \max_3 F = n+3\}.$$
Define the maps
\begin{align*}
    &\xi_{k,n}: \mathcal{S}^{(2)}_{k+1, n+4}\ \rightarrow\ \mathcal{S}^{(2,1)}_{k+1, n+5}\quad \mbox{ as }\quad F\ \mapsto (F\backslash \{n+3\})\cup \{n+5\}\mbox{ and }\\
    &\gamma_{k,n}: \mathcal{S}^{(2)}_{k, n+3}\ \rightarrow\ \mathcal{S}^{(2,2)}_{k+1, n+5}\quad\mbox{ as }\quad F\ \mapsto (F+1)\cup \{n+5\}.
\end{align*}
\begin{lem}\label{l3}
    The map $\xi_{k,n}: \mathcal{S}^{(2)}_{k+1, n+4} \rightarrow \mathcal{S}^{(2,1)}_{k+1, n+5}$ with
    $\xi_{k,n}(F) = (F\backslash \{n+3\})\cup \{n+5\}$ is a bijection. 
\end{lem}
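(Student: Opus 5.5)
We argue directly, as in Lemmas \ref{l1} and \ref{l2}: check that $\xi_{k,n}$ lands in $\mathcal{S}^{(2,1)}_{k+1,n+5}$, note that it is injective, and exhibit its inverse. Recall the standing assumption $k\ge 2$, so every $F\in\mathcal{S}^{(2)}_{k+1,n+4}$ has exactly $|F|=k+1\ge 3$ elements. Moreover $\max F=n+4$ and $\max_2 F=n+3$, so $\max_3 F\le n+2$. Write $F=\{a_1<\cdots<a_{k-1}<n+3<n+4\}$. Then
\[
\xi_{k,n}(F)=\{a_1<\cdots<a_{k-1}<n+4<n+5\}.
\]
This set has size $k+1$, maximum $n+5$, second maximum $n+4$, and third maximum $a_{k-1}\le n+2$. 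Hence it has the shape required for $\mathcal{S}^{(2,1)}_{k+1,n+5}$, provided it is $(k+1)$-super-Schreier.

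For the super-Schreier conditions, the cardinality is unchanged. The $r$th smallest elements for $r\le k-1$ are the same $a_r$, so those inequalities are inherited from $F$. The $k$th smallest element increases from $n+3$ to $n+4$, which can only help. The only new condition is for $r=k+1$:
\[
\min_{k+1}\xi_{k,n}(F)=n+5\ \ge\ (k+1)+2k=3k+1.
\]
This holds because $n\ge 3k-2$. (It already held for $F$, since $\min_{k+1}F=n+4\ge 3k+1$.) Injectivity is clear, since $F$ is recovered by replacing $n+5$ with $n+3$.

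For surjectivity, take $E\in\mathcal{S}^{(2,1)}_{k+1,n+5}$. Then $|E|=k+1$, $E=\{b_1<\cdots<b_{k-1}<n+4<n+5\}$, and $b_{k-1}=\max_3 E\le n+2$. Set
\[
F:=(E\setminus\{n+5\})\cup\{n+3\}.
\]
Since $b_{k-1}\le n+2$, the new element $n+3$ sits strictly between $b_{k-1}$ and $n+4$. Thus $F=\{b_1<\cdots<b_{k-1}<n+3<n+4\}$, with $|F|=k+1$, $\max F=n+4$ and $\max_2F=n+3$.

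The main point, and the only place where the hypothesis $\max_3E\le n+2$ and the bound on $n$ are used, is checking the super-Schreier conditions for $F$.
\begin{itemize}
\item For $r\le k-1$, $\min_r F=b_r$, which satisfies the same inequality as in $E$.
\item For $r=k$, we need $n+3\ge (k+1)+2k-2=3k-1$, which holds since $n\ge 3k-2$.
\item For $r=k+1$, we need $n+4\ge 3k+1$, which also holds.
\end{itemize}
Hence $F\in\mathcal{S}^{(2)}_{k+1,n+4}$ and $\xi_{k,n}(F)=E$, so $\xi_{k,n}$ is a bijection.
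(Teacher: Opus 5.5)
Your proof is correct and follows the paper's argument essentially step for step. Like the paper, you note that replacing $n+3$ by $n+5$ only increases the $k$th and $(k+1)$th smallest elements and forces $\max_3\le n+2$, and you invert the map by swapping $n+5$ back to $n+3$, using $n\ge 3k-2$ for the conditions at $r=k$ and $r=k+1$. One small correction to your closing remark: the hypothesis $\max_3 E\le n+2$ is not used in the super-Schreier check but in guaranteeing $n+3\notin E$, so that $|F|=k+1$ and $\max_2 F=n+3$. You do use it correctly there, and the paper leaves that step implicit.
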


\begin{proof}
Let $F\in \mathcal{S}^{(2)}_{k+1, n+4}$. Since $\xi_{k,n}$ replaces $n+3$ in $F$ with $n+5$, the set $\xi_{k,n}(F)$ is still $(k+1)$-super-Schreier, and $\max \xi_{k,n}(F) = n+5$. Furthermore, we have $|\xi_{k,n}(F)| =|F| = k+1$ and $\{n+4, n+5\}\subset \xi_{k,n}(F)$ because $\{n+3, n+4\}\subset F$. Lastly, that $n+3\notin \xi_{k,n}(F)$ implies that $\max_3 \xi_{k,n}(F)\le n+2$. We have verified that $\xi_{k,n}(F)\in \mathcal{S}^{(2,1)}_{k+1, n+5}$, so $\xi_{k,n}$ is well-defined. 

Next, the map $\xi_{k,n}$ is injective by definition. To see that $\xi_{k,n}$ is surjective, pick $E\in \mathcal{S}^{(2,1)}_{k+1, n+5}$ and let $F = (E\backslash \{n+5\})\cup \{n+3\}$. 
Note that $|F| = |E| = k+1$ and $\max F = n+4$. For $r \le |F| - 2$, 
$$\min_{r} F\ =\ \min_r E \ \ge\ |E| + 2r-2\ =\ |F| + 2r-2.$$
Since $|F| = k+1$,
$$\min_k F\ =\ n+3\ \ge\ 3k-1\ =\ |F| + 2k-2$$
and
$$\min_{k+1} F\ =\ n+4\ \ge\ 3k+1\ =\ |F| + 2(k+1)-2.$$
Hence, the set $F$ is $(k+1)$-super-Schreier.
Furthermore, 
$$\max_2 F \ =\ n+3\mbox{ and }|F| \ =\ k+1.$$
Hence, $F$ is in $\mathcal{S}^{(2)}_{k+1, n+4}$ with $\xi_{k,n}(F) = E$. Therefore, the map $\xi_{k,n}$ is surjective. 
\end{proof}

\begin{lem}\label{l4}
    The map $\gamma_{k,n}: \mathcal{S}^{(2)}_{k, n+3}\rightarrow\mathcal{S}^{(2,2)}_{k+1, n+5}$ with
    $\gamma(F) = (F+1)\cup \{n+5\}$ is a bijection. 
\end{lem}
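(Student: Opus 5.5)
We would prove Lemma \ref{l4} in the same way as Lemmas \ref{l3} and \ref{l2}: first check that $\gamma_{k,n}$ lands in $\mathcal{S}^{(2,2)}_{k+1,n+5}$, then note injectivity, then show surjectivity by writing down the inverse $E\mapsto (E\backslash\{n+5\})-1$. Throughout we assume $k\ge 2$ and $n\ge 3k-2$, as in the setting of \eqref{e2}.

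\emph{Well-definedness.} Let $F\in\mathcal{S}^{(2)}_{k,n+3}$. Then $|F|=k$, $\max F=n+3$, $\max_2 F=n+2$, and $F$ is $k$-super-Schreier. The set $E:=\gamma_{k,n}(F)=(F+1)\cup\{n+5\}$ has $k+1$ elements. It satisfies $\max E=n+5$, $\max_2 E=n+4$ and $\max_3 E=n+3$; in particular $|E|\ge 3$.
\begin{itemize}
\item For $r\in[1,k]$, the $r$th smallest element of $E$ lies in $F+1$, because $|F|=k$. Hence
$$\min_r E\ =\ \min_r F+1\ \ge\ |F|+2r-2+1\ =\ |E|+2r-2.$$
\item For $r=k+1$ we need $\min_{k+1}E=n+5\ge |E|+2k=3k+1$. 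This is equivalent to $n\ge 3k-4$, which holds since $n\ge 3k-2$.
\end{itemize}
Thus $E$ is $(k+1)$-super-Schreier. Together with $|E|=k+1$, $\max_2 E=n+4$ and $\max_3 E=n+3$, this gives $E\in\mathcal{S}^{(2,2)}_{k+1,n+5}$.

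\emph{Injectivity.} This is immediate, since $F$ is recovered from $\gamma_{k,n}(F)$ by deleting $n+5$ and shifting down by $1$.

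\emph{Surjectivity.} Take $E\in\mathcal{S}^{(2,2)}_{k+1,n+5}$, so $|E|=k+1\ge 3$, $\max_2 E=n+4$ and $\max_3 E=n+3$. Let $F:=(E\backslash\{n+5\})-1$.
\begin{itemize}
\item $F$ consists of positive integers, because $\min E\ge |E|\ge 2$.
\item $|F|=k$, $\max F=n+3$, and $\max_2 F=n+2$ (here we use $|E|\ge 3$).
\item For $r\in[1,k]$, the $r$th smallest element of $E$ is not $n+5$, so
$$\min_r F\ =\ \min_r E-1\ \ge\ |E|+2r-3\ =\ |F|+2r-2.$$
\end{itemize}
Hence $F$ is $k$-super-Schreier, $F\in\mathcal{S}^{(2)}_{k,n+3}$, and $\gamma_{k,n}(F)=E$.

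The only delicate point is the index bookkeeping. One must check that the first $k$ order statistics of $E$ come from $F+1$, and that the new top condition at $r=k+1$ reduces to $n\ge 3k-4$, which the hypothesis $n\ge 3k-2$ guarantees. Everything else is routine.
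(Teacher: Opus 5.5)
Your proof is correct and follows the paper's argument essentially step for step. You check the order statistics of $(F+1)\cup\{n+5\}$ separately for $r\le k$ and $r=k+1$, note injectivity, and invert via $E\mapsto (E\backslash\{n+5\})-1$.

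Your explicit remark that the $r=k+1$ condition $n+5\ge 3k+1$ relies on the standing hypothesis $n\ge 3k-2$ is a worthwhile clarification; the paper asserts that inequality without comment. Membership in $\mathcal{S}^{(2)}_{k,n+3}$ alone only yields $n\ge 3k-5$.
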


\begin{proof}
    Let $F\in \mathcal{S}^{(2)}_{k, n+3}$. Then $|F| = k$. For integers $r\in [1, k]$, 
    $$\min_r \gamma_{k,n}(F)\ =\ \min_r F + 1 \ \ge\ |F| + 2r -2 +1\ =\ |\gamma_{k,n}(F)| + 2r-2,$$
    and
    $$\min_{k+1} \gamma_{k,n}(F)\ =\ n+5\ \ge\ 3k+1\ =\ |\gamma_{k,n}(F)|+2(k+1)-2.$$
    Hence, the set $\gamma_{k,n}(F)$ is $(k+1)$-super-Schreier. Furthermore, $\max \gamma_{k,n}(F) \ =\ n+5$, 
    $$\max_2\gamma_{k,n}(F)\ =\ n+4, |\gamma_{k,n}(F)|\ =\ k+1, \mbox{ and }\max_3\gamma_{k,n}(F)\ =\ n+3.$$
    We have verified that $\gamma_{k,n}(F)$ is in $\mathcal{S}^{(2,2)}_{k+1, n+5}$.

    It is clear from the definition that $\gamma_{k,n}$ is injective. We show that $\gamma_{k,n}$ is surjective. Pick $E\in \mathcal{S}^{(2,2)}_{k+1, n+5}$ and let $F := (E\backslash \{n+5\})-1$. Note that for each integer $r\in [1,k]$, 
    $$\min_r F\ =\ \min_r E-1\ \ge\ |E|+2r-2 -1\ =\ |F| + 2r-2,$$
    so $F$ is $k$-super-Schreier. Furthermore,
    $$\max F\ =\ n+3, \max_2 F\ =\ n+2, \mbox{ and }|F| \ =\ k.$$
    Hence, the set $F$ is in $F\in \mathcal{S}^{(2)}_{k, n+3}$ with $\gamma_{k,n}(F) = E$. Therefore, the map $\gamma_{k,n}$ is surjective.
\end{proof}

\begin{proof}[Proof of Proposition \ref{p1}]
    Proposition \ref{p1} follows from Corollary \ref{p2}, Lemma \ref{l3}, and Lemma \ref{l4}.
\end{proof}

Given a sequence $(a_n)_{n=1}^\infty$, define the sequence $(a^{*}_n)_{n=1}^\infty$ as
$$a^{*}_{n-2}\ =\ a_n - a_{n-1} - a_{n-2}, \mbox{ for }n\ge 3.$$
\begin{table}[H]
\centering
\begin{tabular}{ |c| c| c| c| c| c| c| c| c| c| c| c| c| c| c| c| c|}
\hline
$n$ &$1$& $2$ & $3$ & $4$ & $5$ & $6$ & $7$ & $8$ & $9$ & $10$ & $11$ & $12$ & $13$ & $14$ & $15$ \\
\hline
$|\mathcal{S}_{1, n}|^*$ & $0$ & $0$ & $0$ & $0$ & $0$ & $0$ & $0$ & $0$ & $0$ & $0$ & $0$ & $0$ & $0$ & $0$ & $0$  \\
\hline
$|\mathcal{S}_{2, n}|^*$ & $0$ & $2$ & $1$ & $1$ & $1$ & $1$ & $1$ & $1$ & $1$ & $1$ & $1$ & $1$ & $1$ & $1$ & $1$ \\
\hline
$|\mathcal{S}_{3, n}|^*$ & $0$ & $0$ & $0$ & $0$ & $5$ & $4$ & $5$ & $6$ & $7$ & $8$ & $9$ & $10$ & $11$ & $12$ & $13$   \\
\hline
$|\mathcal{S}_{4, n}|^*$ & $0$ & $0$ & $0$ & $0$ & $0$ & $0$ & $0$ & $14$ & $14$ & $20$ & $27$ & $35$ & $44$ & $54$ & $65$  \\
\hline
$|\mathcal{S}_{5, n}|^*$ & $0$ & $0$ & $0$ & $0$ & $0$ & $0$ & $0$ & $0$ & $0$ &  $0$ &  $42$ &  $48$ & $75$ & $110$ & $154$ \\
\hline
$|\mathcal{S}_{6, n}|^*$ & $0$ & $0$ & $0$ & $0$ & $0$ & $0$ & $0$ & $0$ & $0$ &  $0$ &  $0$ &  $0$ & $0$ & $132$ & $165$ \\
\hline
\end{tabular}
\caption{The first $15$ values of $(|\mathcal{S}_{k, n}|^*)_{n=1}^\infty$ with integers $k \in [1,6]$.}
\end{table}

Using the new notation, we restate Proposition \ref{p1} as follows.
\begin{prop}\label{c11}For $k\in\mathbb{N}$ and $n\ge 3k-2$, 
     \begin{equation*}|\mathcal{S}_{k, n+1}|^*  
     \ = \  |\mathcal{S}_{k+1, n+3}|^*-|\mathcal{S}_{k+1, n+2}|^*.
     \end{equation*}
\end{prop}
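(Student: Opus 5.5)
This is a direct translation of Proposition \ref{p1} into the starred notation, so the plan is to unwind the definition of $a^*$ and compare term by term. By definition, $a^*_{m-2} = a_m - a_{m-1} - a_{m-2}$ for $m\ge 3$. Applying this with $a_m = |\mathcal{S}_{k,m}|$ and $m = n+3$ gives
$$|\mathcal{S}_{k,n+1}|^* \ =\ |\mathcal{S}_{k,n+3}| - |\mathcal{S}_{k,n+2}| - |\mathcal{S}_{k,n+1}|,$$
which is the first bracket on the left side of \eqref{e1}.

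Next I apply the same definition to the sequence $(|\mathcal{S}_{k+1,m}|)$. With $m = n+5$ I get
$$|\mathcal{S}_{k+1,n+3}|^* \ =\ |\mathcal{S}_{k+1,n+5}| - |\mathcal{S}_{k+1,n+4}| - |\mathcal{S}_{k+1,n+3}|,$$
and with $m = n+4$ I get
$$|\mathcal{S}_{k+1,n+2}|^* \ =\ |\mathcal{S}_{k+1,n+4}| - |\mathcal{S}_{k+1,n+3}| - |\mathcal{S}_{k+1,n+2}|.$$
These are, respectively, the right side of \eqref{e1} and the second bracket on the left side of \eqref{e1}.

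Substituting the three identities into \eqref{e1} gives $|\mathcal{S}_{k,n+1}|^* + |\mathcal{S}_{k+1,n+2}|^* = |\mathcal{S}_{k+1,n+3}|^*$. Rearranging yields the claim. This holds for exactly the range $n\ge 3k-2$ in which Proposition \ref{p1} is established.

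The only thing to check is that every index used satisfies $m\ge 3$, so the starred terms are actually defined. This is immediate: $k\ge 1$ and $n\ge 3k-2\ge 1$ give $m\ge n+3\ge 4$. There is no genuine obstacle; the substantive work is already contained in Proposition \ref{p1}, via Corollary \ref{p2} and Lemmas \ref{l3} and \ref{l4}.
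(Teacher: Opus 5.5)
Your proposal is correct and matches the paper, which presents this proposition simply as a restatement of Proposition \ref{p1} in the starred notation. Unwinding $a^*_{m-2}=a_m-a_{m-1}-a_{m-2}$ at $m=n+3,\,n+5,\,n+4$ identifies the three brackets of \eqref{e1}, and that is exactly the substitution the paper has in mind; your check that $m\ge 4$ is a harmless extra.
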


\begin{cor}\label{c10}
    For $k\in \mathbb{N}$ and $n\ge 3k-2$, 
    $$|\mathcal{S}_{k+1, n+3}|^*\ =\ |\mathcal{S}_{k+1, 3k}|^* + \sum_{i = 3k-1}^{n+1}|\mathcal{S}_{k, i}|^*.$$
    Equivalently, 
    $$|\mathcal{S}_{k+1, n-2}|^*\ =\ |\mathcal{S}_{k+1, 3k}|^* + \sum_{i = 3k-1}^{n-4}|\mathcal{S}_{k, i}|^*, \quad k\in \mathbb{N}, n\ge 3k+3.$$
\end{cor}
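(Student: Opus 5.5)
The plan is to telescope Proposition \ref{c11}. For each $m$ with $3k-2\le m\le n$, Proposition \ref{c11} gives
$$|\mathcal{S}_{k+1, m+3}|^*-|\mathcal{S}_{k+1, m+2}|^*\ =\ |\mathcal{S}_{k, m+1}|^*.$$
Summing these identities over $m=3k-2,3k-1,\ldots,n$ makes the left side collapse to $|\mathcal{S}_{k+1, n+3}|^*-|\mathcal{S}_{k+1, 3k}|^*$. On the right side, reindex with $i=m+1$ to get $\sum_{i=3k-1}^{n+1}|\mathcal{S}_{k,i}|^*$. Rearranging yields the first displayed identity for every $n\ge 3k-2$. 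When $n=3k-2$ the sum has exactly one term, so this is just Proposition \ref{c11} itself, and no separate base case is needed.

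For the equivalent form, substitute $n\mapsto n-5$ in the first identity. The condition $n-5\ge 3k-2$ becomes $n\ge 3k+3$. The left side becomes $|\mathcal{S}_{k+1,n-2}|^*$, and the upper summation limit $n+1$ becomes $n-4$, which gives exactly the second display.

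The main point to check is that Proposition \ref{c11} is valid for every index used. Its hypothesis is $n\ge 3k-2$, which holds for each $m$ in the range. Also, every term $|\mathcal{S}_{k,i}|^*$ and $|\mathcal{S}_{k+1,j}|^*$ that appears needs its defining indices $i+2,j+2\ge 3$, and the smallest index used is $3k\ge 3$, so this holds.

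Beyond this bookkeeping of the summation range, there is no real obstacle. The only step needing care is confirming that the lower limit $3k-1$ and the constant term $|\mathcal{S}_{k+1,3k}|^*$ match the first telescoped term $m=3k-2$.
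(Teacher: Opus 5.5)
Your telescoping argument is correct and is essentially the paper's proof: the paper iterates Proposition \ref{c11} downward from $|\mathcal{S}_{k+1,n+3}|^*$ until the index reaches $3k$, which is exactly where the hypothesis $n\ge 3k-2$ stops applying, while you sum the same identities over $m=3k-2,\ldots,n$, a cleaner way of recording the same computation. One trivial slip: the smallest index that appears is $3k-1$ (in $|\mathcal{S}_{k,3k-1}|^*$), not $3k$, but since $a^*_j$ is defined for every $j\ge 1$ this changes nothing.
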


\begin{proof}
    Applying Proposition \ref{c11} repeatedly, we have
    \begin{align}\label{e40}
    |\mathcal{S}_{k+1, n+3}|^*&\ =\ |\mathcal{S}_{k+1, n+2}|^* + |\mathcal{S}_{k,n+1}|^*\nonumber\\
    &\ =\ |\mathcal{S}_{k+1, n+1}|^* +  |\mathcal{S}_{k,n+1}|^* + |\mathcal{S}_{k,n}|^*\nonumber\\
    &\quad \vdots\nonumber\\
    &\ =\ |\mathcal{S}_{k+1, n+3-m}|^* + |\mathcal{S}_{k, n+3-2}|^* + \cdots + |\mathcal{S}_{k, n+3-(m+1)}|^*,
    \end{align}
    as long as $n+4-m\ge 3(k+1)-2$, i.e., $m\le n-3k+3$. Plugging $m = n-3k+3$ into \eqref{e40} gives
    $$|\mathcal{S}_{k+1, n+3}|^*\ =\ |\mathcal{S}_{k+1, 3k}|^* + \sum_{i=3k-1}^{n+1}|\mathcal{S}_{k,i}|^*.$$
\end{proof}

\begin{lem}\label{l10}For $k\in\mathbb{N}$,
    $$|\mathcal{S}_{k+1, 3k}|^*\ =\ \frac{4}{k+3}\binom{2k+1}{k-1}.$$
\end{lem}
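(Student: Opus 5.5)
By the definition of the starred sequence, $|\mathcal{S}_{k+1,3k}|^* = |\mathcal{S}_{k+1,3k+2}| - |\mathcal{S}_{k+1,3k+1}| - |\mathcal{S}_{k+1,3k}|$. Here $3k+1 = 3(k+1)-2$ and $3k+2 = 3(k+1)-1$, and $3k\le 3(k+1)-3$. So Proposition \ref{p0} gives $|\mathcal{S}_{k+1,3k}|=0$, and the quantity to compute is
$$|\mathcal{S}_{k+1,3k}|^* \ =\ |\mathcal{S}_{k+1,3k+2}| - |\mathcal{S}_{k+1,3k+1}|.$$
Both terms are given in closed form by Proposition \ref{p10} and Corollary \ref{i_c} with $k$ replaced by $k+1$:
$$|\mathcal{S}_{k+1,3k+1}| = \frac{1}{k+2}\binom{2k+2}{k+1},\qquad |\mathcal{S}_{k+1,3k+2}| = \frac{3}{2k+3}\binom{2k+3}{k+3}.$$

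The plan is to express both terms as rational multiples of one common factorial expression and subtract. The natural choice is $\frac{(2k+1)!}{(k+2)!\,(k-1)!}=\binom{2k+1}{k-1}$, mirroring the computation in the proof of Corollary \ref{i_c}. Concretely:
$$\frac{3}{2k+3}\binom{2k+3}{k+3} = \frac{3(2k+2)!}{(k+3)!\,k!} = \frac{3(2k+2)}{k(k+3)}\binom{2k+1}{k-1},$$
$$\frac{1}{k+2}\binom{2k+2}{k+1} = \frac{(2k+2)!}{(k+2)!\,(k+1)!} = \frac{2k+2}{k(k+1)}\binom{2k+1}{k-1} = \frac{2}{k}\binom{2k+1}{k-1}.$$
The difference is
$$\frac{2}{k}\left(\frac{3(k+1)}{k+3}-1\right)\binom{2k+1}{k-1} = \frac{2}{k}\cdot\frac{2k}{k+3}\binom{2k+1}{k-1} = \frac{4}{k+3}\binom{2k+1}{k-1},$$
as claimed. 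As a sanity check, $k=1$ gives $\frac{4}{4}\binom{3}{0}=1$, which matches the table entry $|\mathcal{S}_{2,3}|^*=1$. Likewise $k=2$ gives $\frac45\binom51=4$, matching $|\mathcal{S}_{3,6}|^*=4$.

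The only real obstacle is bookkeeping. First, the index shift $k\to k+1$ must be applied correctly in Corollary \ref{i_c}. Second, the factorial manipulation must be done carefully. The hypotheses of Proposition \ref{p0}, Proposition \ref{p10} and Corollary \ref{i_c} all hold for every $k\in\mathbb{N}$, so no separate small cases are needed.
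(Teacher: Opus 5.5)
Your proposal is correct and essentially identical to the paper's proof. You kill $|\mathcal{S}_{k+1,3k}|$ via Proposition \ref{p0}, substitute Proposition \ref{p10} and Corollary \ref{i_c} at index $k+1$, and rewrite both terms as the multiples $\frac{3(2k+2)}{k(k+3)}$ and $\frac{2}{k}$ of $\binom{2k+1}{k-1}$ before subtracting, exactly as the paper does; your algebra and table checks are correct.
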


\begin{proof}
By Propositions \ref{p0} and \ref{p10} and Corollary \ref{i_c}, we have
\begin{align*}
    |\mathcal{S}_{k+1, 3k}|^*&\ =\ |\mathcal{S}_{k+1, 3k+2}| - |\mathcal{S}_{k+1, 3k+1}| - |\mathcal{S}_{k+1, 3k}|\\
    &\ =\ \frac{3}{2k+3}\binom{2k+3}{k+3} - \frac{1}{k+2}\binom{2k+2}{k+1} - 0\\
    &\ =\ \frac{3}{2k+3}\frac{(2k+3)!}{(k+3)!k!} - \frac{1}{k+2}\frac{(2k+2)!}{(k+1)!(k+1)!}\\
    &\ =\ \frac{3(2k+2)(2k+3)}{k(k+3)(2k+3)}\frac{(2k+1)!}{(k+2)!(k-1)!} - \frac{2k+2}{k(k+1)}\frac{(2k+1)!}{(k+2)!(k-1)!}\\
    &\ =\ \frac{3(2k+2)}{k(k+3)}\binom{2k+1}{k-1} - \frac{2}{k}\binom{2k+1}{k-1}\ =\ \frac{4}{k+3}\binom{2k+1}{k-1}.
\end{align*}
\end{proof}

\begin{proof}[Proof of Theorem \ref{m2}, second statement]
When $k = 1$, we have $|\mathcal{S}_{1,n}| = F_n$ for all $n\in \mathbb{N}$ according to \cite{Bi}, so 
\begin{equation}\label{e11}|\mathcal{S}_{1, n-2}|^*\ =\ |\mathcal{S}_{1, n}| - |\mathcal{S}_{1, n-1}| - |\mathcal{S}_{1,n-2}| \ =\ 0, \mbox{ for all }n\ge 3.\end{equation}

When $k = 2$, Corollary \ref{c10} and \eqref{e11} give that
\begin{equation}\label{e12}|\mathcal{S}_{2, n}| - |\mathcal{S}_{2,n-1}| - |\mathcal{S}_{2,n-2}| \ =\ |\mathcal{S}_{2, n-2}|^*\ =\ |\mathcal{S}_{2, 3}|^* + \sum_{i = 2}^{n-4}|\mathcal{S}_{1,i}|^*\ =\ 1, \mbox{ for all }n\ge 6.\end{equation}

When $k = 3$, Corollary \ref{c10} and \eqref{e12} give that 
\begin{align}\label{e13}|\mathcal{S}_{3, n}| - |\mathcal{S}_{3, n-1}| - |\mathcal{S}_{3, n-2}|\ =\ |\mathcal{S}_{3,n-2}|^*&\ =\ |\mathcal{S}_{3, 6}|^* + \sum_{i=5}^{n-4}|\mathcal{S}_{2, i}|^*\nonumber\\
&\ =\ 4 + \sum_{i=5}^{n-4}1\ =\ n-4, \mbox{ for all }n\ge 9.
\end{align}

Next, we prove by induction on $k$ that for every $k\ge 4$,
\begin{equation}\label{e14}
|\mathcal{S}_{k, n-2}|^*\ =\ \frac{1}{(k-2)!}\left(\prod_{i=k}^{2k-4}(n-i)\right)(n-3k+5), \mbox{ for all }n\ge 3k.
\end{equation}
For the base case $k = 4$, Corollary \ref{c10} and \eqref{e13} give  
$$
    |\mathcal{S}_{4, n-2}|^*\ =\ |\mathcal{S}_{4, 9}|^* + \sum_{i = 8}^{n-4}|\mathcal{S}_{3, i}|^*\ =\ 14 + \sum_{i=8}^{n-4} (i-2)\ =\ \frac{1}{2}(n-4)(n-7), \mbox{ for all }n\ge 12,
$$
so \eqref{e14} holds for $k = 4$. Assume that \eqref{e14} is true for some $k\ge 4$ and for all $n\ge 3k$.
By Corollary \ref{c10} and Lemma \ref{l10}, for $n\ge 3k+3$, we have
    \begin{align*}
    |\mathcal{S}_{k+1, n-2}|^*&\ =\ |\mathcal{S}_{k+1, 3k}|^* + \sum_{j = 3k-1}^{n-4}|\mathcal{S}_{k, j}|^*\\
    &\ =\ \frac{4}{k+3}\binom{2k+1}{k-1} + \frac{1}{(k-2)!}\sum_{j = 3k-1}^{n-4}\left(\prod_{i=k}^{2k-4}(j+2-i)\right) (j-3k+7).
    \end{align*}
    We wish to show that for all $n\ge 3k+3$,
    \begin{align}\label{e3}\frac{4}{k+3}\binom{2k+1}{k-1} + \frac{1}{(k-2)!}\sum_{j = 3k-1}^{n-4}&\left(\prod_{i=k-2}^{2k-6}(j-i)\right) (j-3k+7)\nonumber\\
    &\ =\ \frac{1}{(k-1)!}\left(\prod_{i=k+1}^{2k-2}(n-i)\right) (n-3k+2).\end{align}
    We proceed by induction on $n$. 
    For the base case $n = 3k+3$, the left-hand side of \eqref{e3} is
    \begin{align*}
        &\frac{4}{k+3}\binom{2k+1}{k-1} + \frac{1}{(k-2)!}\sum_{j = 3k-1}^{3k-1}\left(\prod_{i=k-2}^{2k-6}(j-i)\right) (j-3k+7)\\
        &\ =\ \frac{4}{k+3}\binom{2k+1}{k-1} + \frac{1}{(k-2)!}\left(\prod_{i=k-2}^{2k-6}(3k-1-i)\right) (3k-1-3k+7)\\
        &\ =\ \frac{4}{k+3}\binom{2k+1}{k-1} + \frac{6}{(k-2)!}\prod_{i=k-1}^{2k-5}(3k-i).
    \end{align*}
    Meanwhile, the right-hand side of \eqref{e3} is
    \begin{align*}\frac{1}{(k-1)!}\left(\prod_{i=k+1}^{2k-2}(3k+3-i)\right) (3k+3-3k+2)&\ =\ \frac{5}{(k-1)!}\prod_{i=k-2}^{2k-5}(3k-i)\\
    &\ =\ \frac{5}{(k-1)!}(2k+2)\prod_{i=k-1}^{2k-5}(3k-i).
    \end{align*}
    Hence, when $n = 3k+3$, \eqref{e3} states that 
    $$\frac{4}{k+3}\binom{2k+1}{k-1} + \frac{6}{(k-2)!}\prod_{i=k-1}^{2k-5}(3k-i)\ =\  \frac{5}{(k-1)!}(2k+2)\prod_{i=k-1}^{2k-5}(3k-i),$$
    which is equivalent to 
      $$\frac{4}{k+3}\binom{2k+1}{k-1} \ =\ \left(\frac{5}{(k-1)!}(2k+2)-\frac{6(k-1)}{(k-1)!}\right)\prod_{i=k-1}^{2k-5}(3k-i),$$
      i.e., 
       \begin{equation}\label{e15}\frac{4}{k+3}\binom{2k+1}{k-1} \ =\ \frac{4k+16}{(k-1)!}\prod_{i=k-1}^{2k-5}(3k-i).\end{equation}
    To see why \eqref{e15} is true,  we note that
    \begin{align*}
        \frac{4k+16}{(k-1)!}\prod_{i=k-1}^{2k-5}(3k-i)\ =\ \frac{4(k+4)}{(k-1)!}\prod_{i=k+5}^{2k+1}i&\ =\ \frac{4(k+4)}{(k-1)!}\frac{(2k+1)!}{(k+4)!}\\
        &\ =\ \frac{4}{k+3}\frac{(2k+1)!}{(k-1)!(k+2)!}\\
        &\ =\ \frac{4}{k+3}\binom{2k+1}{k-1}.
    \end{align*}

    Assume that \eqref{e3} holds for some $n\ge 3k+3$, i.e.,
      \begin{align}\label{e20}\frac{4}{k+3}\binom{2k+1}{k-1} + \frac{1}{(k-2)!}\sum_{j = 3k-1}^{n-4}&\left(\prod_{i=k-2}^{2k-6}(j-i)\right) (j-3k+7)\nonumber\\
    &\ =\ \frac{1}{(k-1)!}\left(\prod_{i=k+1}^{2k-2}(n-i)\right) (n-3k+2).\end{align}
    We need to show that 
            \begin{align}\label{e21}\frac{4}{k+3}\binom{2k+1}{k-1} + \frac{1}{(k-2)!}\sum_{j = 3k-1}^{n-3}&\left(\prod_{i=k-2}^{2k-6}(j-i)\right) (j-3k+7)\nonumber\\
            &\ =\ \frac{1}{(k-1)!}\left(\prod_{i=k}^{2k-3}(n-i)\right) (n-3k+3).\end{align}
    Subtracting \eqref{e20} from \eqref{e21} side by side, we obtain
\begin{align*}&\frac{1}{(k-2)!}\left(\prod_{i=k-2}^{2k-6}(n-3-i)\right)(n-3-3k+7)\\
&\ =\ \frac{1}{(k-1)!}\left(\prod_{i=k+1}^{2k-3}(n-i)\right)((n-k)(n-3k+3)-(n-2k+2)(n-3k+2)),\end{align*}
which is equivalent to
\begin{align}\label{e30}&(k-1)(n-3k+4)\prod_{i=k+1}^{2k-3}(n-i)\nonumber\\
&\ =\ \left(\prod_{i=k+1}^{2k-3}(n-i)\right)((n-k)(n-3k+3)-(n-2k+2)(n-3k+2)).\end{align}
Since $(k-1)(n-3k+4) = (n-k)(n-3k+3)-(n-2k+2)(n-3k+2)$, \eqref{e30} is true.
\end{proof}

\appendix
\section{Computing $|\mathcal{J}_{k, 3k+1}|$}\label{tech_cal}
\begin{lem}
    For $k\in \mathbb{N}$, we have
    $$|\mathcal{J}_{k,3k+1}|\ =\ \binom{k+1}{3}+2\binom{k+1}{2} + (k+1)^2.$$
\end{lem}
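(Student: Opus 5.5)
Rather than redoing the star-and-bar case analysis from the proof of $|\mathcal{J}_{k,3k}|$ (splitting by $\min F\in\{k,k+1,k+2,k+3\}$ and counting gap and length patterns), I would prove the formula by induction on $k$ using Corollary \ref{inductive}. That corollary was proved by a bijection that does not use the value of $|\mathcal{J}_{k,3k+1}|$, so using it here is not circular.

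\emph{Base case.} For $k=1$, the set $\mathcal{J}_{1,4}$ consists of the Schreier intervals in $\{1,2,3,4\}$. Its size is $6$, which is the first row of Table \ref{Data_oneint}. The formula gives $\binom{2}{3}+2\binom{2}{2}+2^2=0+2+4=6$, so the base case holds.

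\emph{Inductive step.} Apply Corollary \ref{inductive} with $n=3k+1$:
$$|\mathcal{J}_{k+1,3k+4}|\ =\ |\mathcal{J}_{k,3k+1}| + |\mathcal{J}_{k+1,3k+3}| + |\mathcal{J}_{k+1,3k+2}| - |\mathcal{J}_{k+1,3k+1}|.$$
Since $3k+4=3(k+1)+1$, the left side is the quantity we want for $k+1$. On the right, the indices $3k+3$, $3k+2$ and $3k+1$ equal $3(k+1)$, $3(k+1)-1$ and $3(k+1)-2$. The corresponding values are already known from the earlier lemmas of this section: they are $\binom{k+2}{2}+2k+3$, then $k+2$, then $1$. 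Substituting these together with the inductive hypothesis for $|\mathcal{J}_{k,3k+1}|$ reduces the claim to the identity
$$\binom{k+2}{3}+2\binom{k+2}{2}+(k+2)^2 \ =\ \binom{k+1}{3}+2\binom{k+1}{2}+(k+1)^2 + \binom{k+2}{2}+3k+4.$$

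\emph{Checking the identity.} By Pascal's rule the left side minus the first three terms on the right equals $\binom{k+1}{2}+2(k+1)+(2k+3)=\binom{k+1}{2}+4k+5$. On the other hand, $\binom{k+2}{2}+3k+4=\binom{k+1}{2}+(k+1)+3k+4=\binom{k+1}{2}+4k+5$. The two agree, which completes the induction.

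The only real point needing care is making sure the cited initial values are the ones for index $k+1$, and that Corollary \ref{inductive} holds for every $n\in\mathbb{N}$ (it does, as stated). The rest is the short Pascal-rule computation above. If one wanted a direct combinatorial proof instead, the obstacle would be the case $\min F=k$. There the total gap excess can be up to $3$, and one must also allow intervals of length $2$ when $\min F=k+1$. That bookkeeping is what the induction avoids.
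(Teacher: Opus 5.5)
Your proof is correct, and it takes a different route from the paper's. The paper proves this lemma directly, in the same way as the $|\mathcal{J}_{k,3k}|$ lemma. It splits by $j=\min F$ and counts interval lengths and gaps with stars and bars. This gives $\binom{k+2}{3}$ sets with $j=k$, $\binom{k+1}{2}+k^2$ with $j=k+1$, $2k$ with $j=k+2$, and one set with $j\ge k+3$.

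You instead apply Corollary \ref{inductive} at $n=3k+1$ and feed in the three already-established values at $3(k+1)-2$, $3(k+1)-1$ and $3(k+1)$. The substitution $|\mathcal{J}_{k+1,3k+4}|=|\mathcal{J}_{k,3k+1}|+\binom{k+2}{2}+3k+4$ is right, and so is the Pascal-rule identity. Corollary \ref{inductive} rests only on the bijections of Lemma \ref{inductive_lem}, so there is no circularity.

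What your route buys is that it avoids the case bookkeeping entirely; the $j=k+1$ case, where lengths and gaps both vary, is the error-prone part of the direct count. In fact the same trick recovers all the initial values from the $k=1$ row and the vanishing lemma. Taking $n=3k-2$, $3k-1$, $3k$ in Corollary \ref{inductive} gives $|\mathcal{J}_{k+1,3k+1}|=|\mathcal{J}_{k,3k-2}|$, $|\mathcal{J}_{k+1,3k+2}|=|\mathcal{J}_{k,3k-1}|+1$ and $|\mathcal{J}_{k+1,3k+3}|=|\mathcal{J}_{k,3k}|+k+3$.

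What the paper's route buys is self-containment, since it does not depend on Section 2.2. It also gives a refined count by minimum, which shows where each term of the formula comes from.

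For the base case, avoid citing the table, which is computed data. Either list $\{1\},\{2\},\{3\},\{4\},\{2,3\},\{3,4\}$, or use Lemma \ref{formula_basecase} to get $|\mathcal{J}_{1,4}|=1+1+2+2=6$.
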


\begin{proof}
Let $F\in \mathcal{J}_{k,3k+1}$ with $\min F = j$.
For $j\ge k$, the maximum of a set in $\mathcal{J}_{k, 3k+1}$ with $\min F = j$ is minimized 
when $F = F_j := [j, j+2k-2]_2$. 
Let $(x_i)_{1\le i\le k}$ be the length of the $k$ intervals in $F$, and let
$(y_j)_{1\le j\le k-1}$ be the $(k-1)$ gaps between consecutive intervals. 
We proceed by case analysis.

Case 1: $j\ge k+3$. Then $\max F\ge\max F_j \ge 3k+1$. Since sets in $\mathcal{J}_{k, 3k+1}$ can only contain
integers up to $3k+1$, the only set $F\in \mathcal{J}_{k, 3k+1}$ with $\min F\ge k+3$ is $F_{k+3}$.

Case 2: $j = k+2$. Then $F_{k+2} = [k+2, 3k]_2$. Since $\min F_{k+2} > |F_{k+2}|$, we may obtain other sets in two ways: first, we allow exactly one gap of $2$ between two integers in $F$, and second, we allow $F$ to contain two consecutive integers. For the former, we have
$$\sum_{i=1}^k x_i + \sum_{j=1}^{k-1} y_j \ =\ |\{k+2, k+3, \ldots, 3k+1\}|\ =\ 2k$$
with $x_1 = \cdots = x_k = 1$ and $y_j\ge 1$. Hence,
$\sum_{j=1}^{k-1} y_j  = k$, with $y_j\ge 1$.
By Lemma \ref{star}, the number of sets is $\binom{k-(k-1)+(k-2)}{k-2}  = k-1$.
For the latter, we have
$$\sum_{i=1}^{k} x_i + \sum_{j=1}^{k-1} y_j \ =\  |\{k+2, k+3, \ldots, 3k+1\}|\ =\ 2k$$
with $y_1 = \cdots = y_{k-1} = 1$ and $x_i\ge 1$. Hence,
$\sum_{i=1}^k x_i = k+1$, with $x_i\ge 1$.
By Lemma \ref{star}, the number of sets is $\binom{(k+1)-k+(k-1)}{k-1} = k$.
In this case, we have a total of $1+(k-1) + k = 2k$ sets. 

Case 3: $j = k+1$. Then $F_{k+1} = [k+1, 3k-1]_2$. Since $\min F_{k+1} >  |F_{k+1}|$, we can obtain other sets in the following ways.
\begin{itemize}
    \item We allow gaps between consecutive intervals to be up to $3$. Let $\ell$ be the sum of all the gaps. Then $k-1\le \ell\le k+1$. 
    We have
    $\sum_{j=1}^{k-1} y_j = \ell$, with $y_j\ge 1$.
    By Lemma \ref{star}, the number of sets is 
    \begin{align*}\sum_{\ell = k-1}^{k+1} \binom{\ell - (k-1)+ k-2}{k-2}&\ =\ \sum_{\ell = k-1}^{k+1} \binom{\ell-1}{k-2}\\
    &\ =\ \binom{k-2}{k-2} + \binom{k-1}{k-2} + \binom{k}{k-2}\\
    &\ =\ \binom{k+1}{2}.
    \end{align*}
    \item We allow $F$ to contain exactly $2$ consecutive integers and to either have exactly one gap of $2$ or not. Let $\ell$ be the sum of all the gaps of $F$. Then $k-1\le \ell\le k$.
    We have
    $$\sum_{i=1}^{k} x_i \ =\ k+1,\mbox{ with }x_i\ge 1\quad\mbox{ and }\quad \sum_{j=1}^{k-1} y_j \ =\ \ell, \mbox{ with }y_j\ge 1.$$
    By Lemma \ref{star}, the number of sets is
    $$\sum_{\ell = k-1}^{k}\binom{(k+1) - k + (k-1)}{k-1}\binom{\ell - (k-1)+(k-2)}{k-2}\ =\ k^2.$$
\end{itemize}
In this case, we have $\binom{k+1}{2} + k^2$ sets.

Case 4: $j = k$. Then $F_k = [k, 3k-2]_2$. Due to $\min F = k$, each interval in $F$ must have exactly $1$ element, but the gaps between consecutive elements in $F$ can reach $4$. Let $\ell$ be the sum of all the gaps. Then $k-1\le \ell\le k+2$.
We have
$\sum_{j=1}^{k-1} y_j = \ell$
with $y_j\ge 1$. 
By Lemma \ref{star}, the number of sets is
\begin{align*}
\sum_{\ell=k-1}^{k+2}\binom{\ell - (k-1) + k-2}{k-2} &\ =\ \sum_{\ell= k-1}^{k+2}\binom{\ell-1}{k-2}\\
&\ =\ \binom{k-2}{k-2} + \binom{k-1}{k-2} + \binom{k}{k-2} + \binom{k+1}{k-2}\\
&\ =\ 1 + (k-1) + \binom{k}{2} + \binom{k+1}{3}\\
&\ =\ \binom{k+1}{2} + \binom{k+1}{3}.
\end{align*}

Therefore, $|\mathcal{J}_{k, 3k}| = \binom{k+1}{3}+2\binom{k+1}{2} + (k+1)^2$,
as claimed. 
\end{proof}

\section{Relating $(|\mathcal{J}_{k,n}|)_{n=1}^\infty$ and $(|\mathcal{J}_{k+1,n}|)_{n=1}^\infty$}\label{woAI_appen}
To mitigate the sentiment that \eqref{AI_relation} is pulled out of thin air by Gemini \cite{Gemini}, we offer a possible way of how to discover such a relation. A natural first reaction is to look at the difference sequences of each row of Table \ref{Data_oneint}. 
Given a sequence $(a_n)_{n=1}^\infty$, we use $d_i((a_n))$ to denote the $i$th difference sequence of $(a_n)_{n=1}^\infty$. In particular,
$$d_1((a_n))\ :=\  a_2-a_1, a_3-a_2, a_4-a_3, a_5-a_4, \ldots.$$
Supposing that $d_i((a_n))$ has been defined for some $i\in \mathbb{N}$, we let 
$$d_{i+1}((a_n)) \ :=\ d_1(d_i((a_n))).$$

We have
$$d_1((|\mathcal{J}_{1,n}|)) \ =\ 1, 2, 2, 3, 3, 4, 4, 5, 5, 6, 6, 7, 7, 8, 8, \ldots,$$
so we have the natural numbers, each of which is repeated twice. Meanwhile, 
\begin{align*}
&d_1((|\mathcal{J}_{2,n}|))\ =\  0, 0, 1, 2, 5, 8, 14, 20, 30, 40, 55, 70, 91, 112,\ldots\mbox{ and }\\
&d_2((|\mathcal{J}_{2,n}|))\ =\ 0, 1, 1, 3, 3, 6, 6, 10, 10, 15, 15, 21, 21, \ldots;
\end{align*}
hence, we need the second difference of $(|\mathcal{J}_{2,n}|)_{n=1}^\infty$ to see repeated differences. 
A similar phenomenon happens with $(|\mathcal{J}_{3,n}|)_{n=1}^\infty$, which requires us to take the third difference to see repetition:
\begin{align*}
&d_1((|\mathcal{J}_{3,n}|))\ =\  0, 0, 0, 0, 0, 1, 3, 9, 19, 39, 69, 119, 189, 294, 434, \ldots;\\
&d_2((|\mathcal{J}_{3,n}|))\ =\ 0, 0, 0, 0, 1, 2, 6, 10, 20, 30, 50, 70, 105, 140, \ldots;\mbox{ and }\\
&d_3((|\mathcal{J}_{3,n}|))\ =\ 0, 0, 0, 1, 1, 4, 4, 10, 10, 20, 20, 35, 35, \ldots.
\end{align*}

The above observation suggests that, instead of comparing $(|\mathcal{J}_{k,n}|)$ with $(|\mathcal{J}_{k+1, n}|)$ directly, we should compare $(|\mathcal{J}_{k,n}|)_{n=1}^\infty$ with $d_1((|\mathcal{J}_{k+1, n}|)_{n=1}^\infty)$. Table \ref{tab_adjust} juxtaposes $(|\mathcal{J}_{k,n}|)_{n=1}^\infty$ and $d_1((|\mathcal{J}_{k+1, n}|)_{n=1}^\infty)$.

\begin{table}[H]
\centering
\begin{tabular}{ |c| c| c| c| c| c| c| c| c| c| c| c| c| c| c| c| c| c| c| c| c|c|}
\hline
$n$ &$1$& $2$ & $3$ & $4$ & $5$ & $6$ & $7$ & $8$ & $9$ & $10$ & $11$ & $12$ & $13$ & $14$ & $15$ & $16$\\
\hline
$(|\mathcal{J}_{1,n}|)$ & $1$ & $2$ & $4$ & $6$ & $9$ & $12$ & $16$ & $20$ & $25$ & $30$ & $36$ & $42$ & $49$ & $56$ & $64$ & $72$  \\
\hline
$d_1((|\mathcal{J}_{2,n}|))$ &  & $0$ & $0$ & $1$ & $2$ &$5$ & $8$ & $14$ & $20$ & $30$ & $40$ & $55$ & $70$ & $91$ & $112$ & $140$  \\
\hline 
\hline
$|\mathcal{J}_{2,n}|$ & $0$ & $0$ & $0$ & $1$ & $3$ &$8$ & $16$ & $30$ & $50$ & $80$ & $120$ & $175$ & $245$ & $336$ & $448$ & $588$  \\
\hline 
$d_1((|\mathcal{J}_{3,n}|))$ &  & $0$ & $0$ & $0$ & $0$ & $0$ & $1$ & $3$ & $9$ & $19$ & $39$ & $69$ & $119$ & $189$ & $294$ & $434$ \\
\hline
\end{tabular}
\caption{A juxtaposition of $(|\mathcal{J}_{k,n}|)$ and $d_1((|\mathcal{J}_{k+1,n}|))$ with $k = 1,2$.}
\label{tab_adjust}
\end{table}

As expected, we see the following relationship between that the difference between  $(|\mathcal{J}_{k,n}|)_{n=1}^\infty$ and $d_1((|\mathcal{J}_{k+1, n}|)_{n=1}^\infty)$: the difference between a term  in 
$d_1((|\mathcal{J}_{k+1,n}|))$ and its second previous term is equal to a term in $(|\mathcal{J}_{k,n}|)$. For example,
in $d_1((|\mathcal{J}_{2,n}|))$,
$$91- 55 \ =\ 36, \quad 112- 70\ =\ 42, \quad \mbox{ and }\quad 140 - 91\ =\ 49.$$
In $d_1((|\mathcal{J}_{3,n}|))$, we have
$$39-9\ =\ 30, \quad 69-19\ =\ 50, \quad \mbox{ and }\quad 119-39\ =\ 80.$$
We, therefore, arrive at the relation
$$(|\mathcal{J}_{k+1,n+3}| - |\mathcal{J}_{k+1, n+2}|) - (|\mathcal{J}_{k+1, n+1}|-|\mathcal{J}_{k+1, n}|)\ =\ |\mathcal{J}_{k,n}|,\mbox{ for all }k,n\in\mathbb{N},$$
which is exactly \eqref{AI_relation}.

\ \\
\end{document}